\theoremstyle{remark}{
\newtheorem{Def}{{\rm Definition}}
\newtheorem{Ex}{{\rm Example}}
\newtheorem{Rem}{{\rm Remark}}

}
\theoremstyle{plain}
{
\newtheorem{Cor}{Corollary}
\newtheorem{Prop}{Proposition}
\newtheorem{Thm}{Theorem}

}
\begin{document}
\title[Refined algebraic domains with characteristic finite sets]{Refined algebraic domains with finite sets in the boundaries}
\author{Naoki kitazawa}
\keywords{Algebraic domains. Poincar\'e-Reeb Graphs. Circles in the Euclidean plane. Elementary Euclidean geometry. (Non-singular) real algebraic manifolds and real algebraic maps. Singularity theory of smooth functions and maps.
\indent {\it \textup{2020} Mathematics Subject Classification}: Primary~14P05, 14P10, 52C15, 57R45. Secondary~ 58C05.}

\address{Institute of Mathematics for Industry, Kyushu University, 744 Motooka, Nishi-ku Fukuoka 819-0395, Japan\\
 TEL (Office): +81-92-802-4402 \\
 FAX (Office): +81-92-802-4405 \\
}
\email{n-kitazawa@imi.kyushu-u.ac.jp, naokikitazawa.formath@gmail.com}
\urladdr{https://naokikitazawa.github.io/NaokiKitazawa.html}
\maketitle
\begin{abstract}
{\it Refined algebraic domains} are regions in the plane surrounded by finitely many non-singular real algebraic curves which may intersect with normal crossing. We are interested in shapes of such regions with surrounding real algebraic curves. {\it Poincar\'e-Reeb Graphs} of them are graphs the regions naturally collapse to respecting the projection to a straight line. Such graphs were first formulated by Sorea, for example, around 2020, and regions surrounded by mutually disjoint non-singular real algebraic curves were mainly considered. The author has generalized the studies to several general situations. 

We find classes of such objects defined inductively by adding curves. We respect characteristic finite sets in the curves. We consider regions surrounded by the curves and of a new type. We investigate geometric properties and combinatorial ones of them and discuss important examples. We also previously studied explicit classes defined inductively in this way and review them.

\end{abstract}
\section{Introduction.}
\label{sec:1}
The (interior of the) unit disk is one of simplest bounded regions in the plane. Regions in the plane surrounded by (so-called) {\it non-singular} real algebraic curves are generalizations, and fundamental spaces and objects in algebraic geometry, mainly. \cite{bodinpopescupampusorea, sorea1, sorea2} present elementary, fundamental, natural, and surprisingly new understanding of their shapes, especially, convexity. These regions are called {\it algebraic domains}, for example. Graphs they collapse to respecting the projection to the straight line $\{(t,0) \mid t \in \mathbb{R}\}$ are introduced and important there: in our paper let ${\mathbb{R}}^n$ ($\mathbb{R}:={\mathbb{R}}^1$) denote the $n$-dimensional Euclidean space, being also a Riemannian manifold equipped with the standard Euclidean metric, and $||x|| \geq 0$ the distance between $x \in {\mathbb{R}}^n$ and the origin $0 \in {\mathbb{R}}^n$ under this metric. 
For example, so-called generic embedding of graphs into ${\mathbb{R}}^2$ with respect to the projection to the line $\{(t,0) \mid t \in \mathbb{R}\}$ have been shown to be realized as Poincar\'e-Reeb graphs of some real algebraic domains, by several technique of real algebraic approximation. Note that including this exposition, the content of present section respects exposition of \cite{kitazawa8} mainly and related studies \cite{kitazawa5, kitazawa6, kitazawa7} of the author.

\subsection{Our fundamental terminologies, notions and notation.}
We use ${\pi}_{m,n}:{\mathbb{R}}^m \rightarrow {\mathbb{R}}^n$ with $m>n \geq 1$ for the so-called canonical projection ${\pi}_{m,n}(x)=x_1$ with $x=(x_1,x_2) \in {\mathbb{R}}^n \times {\mathbb{R}}^{m-n}={\mathbb{R}}^m$. We represent the $k$-dimensional unit disk by $D^k:=\{x \in {\mathbb{R}}^k \mid ||x|| \leq 1\}$ and the $k$-dimensional unit sphere by $S^k:=\{x \in {\mathbb{R}}^{k+1} \mid ||x||=1\}$. 

Let $X$ be a topological space and $Y \subset X$ a subspace. We use $\overline{Y}$ for the closure and $Y^{\circ}$ for the interior in $X$ where $X$ is a Euclidean space considered in the discussion unless otherwise stated. For a topological space $X$ having the structure of a so-called cell complex, we can define the dimension $\dim X$ uniquely as the dimension of the cell of the maximal dimension and this only depends on the topology of $X$. Topological manifolds, polyhedra, and graphs, regarded as $1$-dimensional CW complexes, are examples for such spaces. 
For a smooth manifold $X$ and a point $x \in X$, we use $T_xX$ for the tangent vector space of $X$ at $x$. For smooth manifolds $X$ and $Y$ and a smooth map $c:X \rightarrow Y$, a point $x \in X$ is called a {\it singular point} of $c$ if the rank of the differential ${dc}_x:T_xX \rightarrow T_{c(x)}Y$ is smaller than the minimum between the dimensions $\dim X$ and $\dim Y$: remember that the differential ${dc}_x$ is linear.
A union $S$ of connected components of the zero set of a real polynomial map is {\it non-singular} if the polynomial map has no singular point in the set $S$: remember the implicit function theorem.  

A {\it graph} is a CW complex of dimension $1$ with $1$-cells ({\it edges}) and $0$-cells ({\it vertices}). The set of all edges (vertices) of the graph is the {\it edge set} ({\it vertex set}) of it. Two graphs $G_1$ and $G_2$ are {\it isomorphic} if there exists a (piecewise smooth) homeomorphism $\phi:G_1 \rightarrow G_2$ mapping the vertex set of $G_1$ onto that of $G_2$ and this is called an {\it isomorphism} of the graphs. A {\it digraph} is a graph all of whose edges are oriented. Two digraphs are {\it isomorphic} if there exists an isomorphism of graphs preserving the orientations and this is called an {\it isomorphism} of the digraphs.
\subsection{Refined algebraic domains.}
\label{subsec:1.2}
\begin{Def}
\label{def:1}
In the present paper, a pair of a family $\mathcal{S}=\{S_j \subset {\mathbb{R}}^2\}$ each $S_j$ of which is a connected component of a real polynomial and non-singular and a region $D_{\mathcal{S}} \subset {\mathbb{R}}^2$ satisfying the following conditions is called a {\it refined algebraic domain}. 
\begin{enumerate}
\item The region $D_{\mathcal{S}}$ is a bounded connected component of ${\mathbb{R}}^2-{\bigcup}_{S_j \in \mathcal{S}} S_j$ such that the intersection $\overline{D_{\mathcal{S}}} \bigcap S_j$ is non-empty for any curve $S_j \in \mathcal{S}$.
\item At each point $p_{j_1,j_2} \in \overline{D_{\mathcal{S}}}$, at most two distinct curves $S_{j_1}, S_{j_2} \in \mathcal{S}$ intersect enjoying the following properties: for each $p_{j_1,j_2}$ of the intersection of distinct two curves from $\mathcal{S}$, the sum of the tangent vector spaces of them at $p_{j_1,j_2}$ and the tangent vector space of ${\mathbb{R}}^2$ at $p_{j_1,j_2}$ coincide. 
\end{enumerate}
\end{Def}

Related to this, \cite{kohnpieneranestadrydellshapirosinnsoreatelen} studies regions regarded as refined algebraic domains in our paper, explicitly and systematically, for example. We discuss the restriction of ${\pi}_{2,1,i}$ to $\overline{D_{\mathcal{S}}}$. We consider the set $F_{D_{\mathcal{S}},i}$ of all points in the following. This set is finite since we only consider real algebraic objects.
\begin{itemize}
\item 
Points in $\overline{D_{\mathcal{S}}}$ which are also in exactly two distinct curves $S_{j_1}$ and $S_{j_2}$.
\item By removing the set of all points before from the set $\overline{D_{\mathcal{S}}}-D_{\mathcal{S}}$ of dimension $1$, we have a smooth manifold of dimension $1$ with no boundary. Points which are singular points of the restriction of ${\pi}_{2,1,i}$ to the obtained smooth curve in $\overline{D_{\mathcal{S}}}-D_{\mathcal{S}}$ and which are in the curve $S_j$ not being a disjoint union of connected components of the zero set of a real polynomial of degree $1$. 
\end{itemize} 
We can define the equivalence relation ${\sim}_{D_{\mathcal{S}},i}$ on $\overline{D_{\mathcal{S}}}$ as follows. Two points are equivalent if and only if they are in a same component of the preimage of a same point for the restriction of ${\pi}_{2,i}$ to $\overline{D_{\mathcal{S}}}$. Let $q_{D_{\mathcal{S}},i}$ denote the quotient map and we can define the function $V_{D_{\mathcal{S}},i}$ with the relation ${\pi}_{2,i}=V_{D_{\mathcal{S}},i} \circ q_{D_{\mathcal{S}},i}$ uniquely. The quotient space $W_{D_{\mathcal{S}},i}:=\overline{D_{\mathcal{S}}}/{\sim}_{D_{\mathcal{S}},i}$ is a digraph. We can check this from general theory \cite{saeki1, saeki2} or see \cite{kitazawa5} for example: we do not need to understand this theory.
\begin{enumerate}
\item The vertex set is the set of all points $v$ such that the preimage ${q_{D_{\mathcal{S}},i}}^{-1}(v)$ contains at least one point of the finite set $F_{D_{\mathcal{S}},i}$.
\item The edge connecting $v_1$ and $v_2$ are oriented as one departing from $v_1$ and entering $v_2$ according to $V_{D_{\mathcal{S}},i}(v_1)<V_{D_{\mathcal{S}},i}(v_2)$.
\end{enumerate}
\begin{Def}
\label{def:2}
We call the (di)graph $(W_{D_{\mathcal{S}},i},V_{D_{\mathcal{S}},i})$ a {\it Poincar\'e-Reeb }({\it di}){\it graph of $D_{\mathcal{S}}$}. We omit the function $V_{D_{\mathcal{S}},i}$ where we can guess easily.
\end{Def}
More generally, for a graph $G$ and a map $V_{G}$ on its vertex set onto a partially ordered set $P$ with natural conditions, we can orient the graph according to the values. More precisely, each edge $e$ of the graph connects two distinct vertices $v_{e,1}$ and $v_{e,2}$ and it is oriented according to the rule that the edge $e$ departs from $v_{e,1}$ and enters $v_{e,2}$ if and only if $V_{G}(v_{e,1})<V_{G}(v_{e,2})$: let "$<$" denote the order on $P$. 
A pair of this graph $G$ and a map $V_{G}$ is said to be {\it a V-digraph}. For V-graphs, {\it isomorphisms} between two V-digraphs and the relation that two V-digraphs are {\it isomorphic} are canonically defined, for example.

For original studies on Poincar'e-Reeb graphs, consult \cite{bodinpopescupampusorea, sorea1, sorea2}.
\subsection{Our main work.}
In our paper, we consider a characteristic finite set $A_{D_{\mathcal{S}}}$ in $\overline{D_{\mathcal{S}}}-D_{\mathcal{S}}$: $A_{D_{\mathcal{S}}}:=F_{D_{\mathcal{S}},1} \bigcup F_{D_{\mathcal{S}},2}$ for example. This idea is based on arguments first presented in \cite{kitazawa8} where curves are circles of fixed radii. There points of the form $({x_j}_1+r_j \cos (\frac{\pi a}{4}),{x_j}_2+r_j \sin (\frac{\pi a}{4}))$ with $x_j:=({x_j}_1,{x_j}_2)$, $r_j>0$, and $a=0,1,2,3,4,5,6,7$, are also considered for $A_{D_{\mathcal{S}}} \supset F_{D_{\mathcal{S}},1} \bigcup F_{D_{\mathcal{S}},2}$. As done there with a related pioneering study \cite{kitazawa5} by the author and related studies \cite{kitazawa6, kitazawa8}, following \cite{kitazawa5}, we consider adding curves to existing pairs of regions and curves surrounding the regions, according to certain rules. We investigate suitable classes of such rules. We define classes and investigate geometric properties and combinatorial ones.
We also discuss examples of these classes of rules for the changes of refined algebraic domains with the finite sets, for example: refined algebraic domains with the finite sets are named {\it refined algebraic domains with poles} (Definition \ref{def:3}). 

The next section is devoted to definitions of our new classes (Definitions \ref{def:3} and \ref{def:4}) and our fundamental result (Corollary \ref{cor:1} and Theorem \ref{thm:1}). The third section concentrates on cases where the curves are circles or more generally, the boundaries of ellipsoids, discuss our classes they belong to and have our new result (Theorems \ref{thm:2} and \ref{thm:3}). 
Adding "sufficiently small" closed disks, whose boundaries are circles, in certain rules, and changes of the regions and the Poincar\'e-Reeb V-graphs by the addition, has been studied explicitly and systematically, in \cite{kitazawa5, kitazawa6, kitazawa8}. Such existing rules are reviewed in our stream (Remark \ref{rem:1}).
The fourth section discusses our classes for a case from \cite[Theorem 1]{kitazawa7} (Theorem \ref{thm:4}) and a new revised case (Theorem \ref{thm:5}). Note that our previous studies \cite{kitazawa5, kitazawa6, kitazawa7, kitazawa8} and the present study are originally, motivated by singularity theory of differentiable functions and maps (\cite{kitazawa1, kitazawa2, kitazawa3, kitazawa4}). In short, the closures of the regions are the images of natural real algebraic maps locally like so-called moment maps and generalizing the canonical projections ${\pi}_{m+1,n,i} {\mid}_{S^m}:S^m \rightarrow {\mathbb{R}}^n$ of the unit spheres $S^m \subset {\mathbb{R}}^{m+1}$. We are interested in explicit construction of nice real algebraic maps and understanding their "shapes" and "structures", which is fundamental, natural, and still difficult. We explain our related previous result (Theorem \ref{thm:6}) and an explicit fact related to Theorems \ref{thm:4} and \ref{thm:5} from the viewpoint of our present study.


\section{Refined algebraic domains with poles and classes of them.}
\label{sec:2}
\begin{Def}
\label{def:3}
In Definition \ref{def:1}, let $A_{D_{\mathcal{S}}} \subset \overline{D_{\mathcal{S}}}-D_{\mathcal{S}}$ be a finite set such that for the finite sets $F_{D_{\mathcal{S}},i}$ before, $F_{D_{\mathcal{S}},1} \bigcup F_{D_{\mathcal{S}},2} \subset A_{D_{\mathcal{S}}}$ is satisfied. We call the triplet $(\mathcal{S},D_{\mathcal{S}},A_{D_{\mathcal{S}}})$ a {\it refined algebraic domain with poles}.
\end{Def}
\begin{Def}
\label{def:4}
In Definition \ref{def:3}, for a connected component $S_{j^{\prime}}$ of the zero set of a real polynomial $f_{j^{\prime}}$ and a connected component $D_{j^{\prime}}$ of the complementary set ${\mathbb{R}}^2-S_{j^{\prime}}$, let $A_{j^{\prime}} \subset S_{j^{\prime}}$ be a finite set such that $(\{S_{j^{\prime}}\},D_{j^{\prime}},A_{j^{\prime}})$ is a refined algebraic domain with poles, and suppose that $({\mathcal{S}}^{\prime}:=\mathcal{S} \sqcup \{S_{j^{\prime}}\}, D_{{\mathcal{S}}^{\prime}}:=D_{\mathcal{S}} \bigcap ({\mathbb{R}}^2-\overline{D_{j^{\prime}}}), A_{D_{{\mathcal{S}}^{\prime}}}:=(A_{D_{\mathcal{S}}} \bigcup A_{j^{\prime}}) \bigcap \overline{D_{{\mathcal{S}}^{\prime}}})$ is a refined algebraic domain with poles.
\begin{enumerate}
\item The set $D_{j^{\prime}}$ is said to be {\it $(\mathcal{S},D_{\mathcal{S}})$-connected} if the intersection $\overline{D_{j^{\prime}}} \bigcap \overline{D_{\mathcal{S}}}$ is connected.    
\item The pointed set $(D_{j^{\prime}},x_{j^{\prime}})$ with $x_{j^{\prime}}=({x_{j^{\prime}}}_1,{x_{j^{\prime}}}_2) \in D_{j^{\prime}} \bigcap  (\overline{D_{\mathcal{S}}}-D_{\mathcal{S}})$ is said to be {\it small with respect to $(\mathcal{S},D_{\mathcal{S}},A_{D_{\mathcal{S}}})$} or {\it $(\mathcal{S},D_{\mathcal{S}},A_{D_{\mathcal{S}}})$-S} if each set ${\pi}_{2,1,i}(A_{D_{\mathcal{S}}}) \bigcap {\pi}_{2,1,i}(\overline{D_{j^{\prime}}} \bigcap \overline{D_{\mathcal{S}}})-\{{x_{j^{\prime}}}_i\}$ is empty for $i=1,2$. If in addition, $D_{j^{\prime}}$ is $(\mathcal{S},D_{\mathcal{S}})$-connected, then it is said to be {\it properly small with respect to $(\mathcal{S},D_{\mathcal{S}},A_{D_{\mathcal{S}}})$} or {\it $(\mathcal{S},D_{\mathcal{S}},A_{D_{\mathcal{S}}})$-PS}.
\item The pointed set $(D_{j^{\prime}},x_{j^{\prime}})$ with $x_{j^{\prime}}=({x_{j^{\prime}}}_1,{x_{j^{\prime}}}_2) \in D_{j^{\prime}} \bigcap (\overline{D_{\mathcal{S}}}-D_{\mathcal{S}})$ is said to be {\it locally small with respect to $(\mathcal{S},D_{\mathcal{S}},A_{D_{\mathcal{S}}})$} or {\it $(\mathcal{S},D_{\mathcal{S}},A_{D_{\mathcal{S}}})$-LS} if each set $A_{D_{\mathcal{S}}} \bigcap \overline{D_{j^{\prime}}} \bigcap \overline{D_{\mathcal{S}}}-\{x_{j^{\prime}}\}$ is empty or a finite set of points of the form $({x_{j^{\prime}}}_1,{{x_{j^{\prime}}}_2}^{\prime})$ or $({{x_{j^{\prime}}}_1}^{\prime},{x_{j^{\prime}}}_2)$ with arbitrary numbers ${{x_{j^{\prime}}}_i}^{\prime} \in \mathbb{R}$. If in addition, $D_{j^{\prime}}$ is $(\mathcal{S},D_{\mathcal{S}})$-connected, then it is said to be {\it properly and locally small with respect to $(\mathcal{S},D_{\mathcal{S}},A_{D_{\mathcal{S}}})$} or {\it $(\mathcal{S},D_{\mathcal{S}},A_{D_{\mathcal{S}}})$-PLS}.
\end{enumerate}
\end{Def}
From the definition, we have the following immediately.
\begin{Cor}
	\label{cor:1}
In Definition \ref{def:4}, the pointed set $(D_{j^{\prime}},x_{j^{\prime}})$ with $x_{j^{\prime}}=({x_{j^{\prime}}}_1,{x_{j^{\prime}}}_2) \in D_{j^{\prime}} \bigcap {\bigcup}_{S_j \in \mathcal{S}} S_j$ which is $(\mathcal{S},D_{\mathcal{S}},A_{D_{\mathcal{S}}})$-LS {\rm (}$(\mathcal{S},D_{\mathcal{S}},A_{D_{\mathcal{S}}})$-S{\rm )} is also $(\mathcal{S},D_{\mathcal{S}},A_{D_{\mathcal{S}}})$-PLS {\rm (}resp. $(\mathcal{S},D_{\mathcal{S}},A_{D_{\mathcal{S}}})$-PS{\rm )}.

\end{Cor}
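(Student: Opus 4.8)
\emph{Proof sketch.} The plan is to reduce the statement to a single connectedness assertion and then to establish it. By Definition~\ref{def:4}, being $(\mathcal{S},D_{\mathcal{S}},A_{D_{\mathcal{S}}})$-PLS (resp. $(\mathcal{S},D_{\mathcal{S}},A_{D_{\mathcal{S}}})$-PS) means, \emph{by definition}, being $(\mathcal{S},D_{\mathcal{S}},A_{D_{\mathcal{S}}})$-LS (resp. $(\mathcal{S},D_{\mathcal{S}},A_{D_{\mathcal{S}}})$-S) together with the requirement that $D_{j^{\prime}}$ be $(\mathcal{S},D_{\mathcal{S}})$-connected, that is, that $\overline{D_{j^{\prime}}} \bigcap \overline{D_{\mathcal{S}}}$ be connected. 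Hence, granting the $(\mathcal{S},D_{\mathcal{S}},A_{D_{\mathcal{S}}})$-LS (resp. $(\mathcal{S},D_{\mathcal{S}},A_{D_{\mathcal{S}}})$-S) hypothesis, the only thing left to verify is that $\overline{D_{j^{\prime}}} \bigcap \overline{D_{\mathcal{S}}}$ is connected, using the standing hypothesis $x_{j^{\prime}} \in D_{j^{\prime}} \bigcap {\bigcup}_{S_j \in \mathcal{S}} S_j$.

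First I would record the local picture at $x_{j^{\prime}}$. Since $D_{j^{\prime}}$ is an open subset of ${\mathbb{R}}^{2}$ containing $x_{j^{\prime}}$, a small open disk $N$ about $x_{j^{\prime}}$ satisfies $N \subset D_{j^{\prime}}$; and since $x_{j^{\prime}}$ lies on a curve of $\mathcal{S}$ and in $\overline{D_{\mathcal{S}}}-D_{\mathcal{S}}$, non-singularity of the curves together with Definition~\ref{def:1}(2) shows that, for $N$ small, $N \bigcap \overline{D_{\mathcal{S}}}$ is a half-disk --- or, when $x_{j^{\prime}}$ is a transverse meeting point of two curves of $\mathcal{S}$, a union of at most two closed sectors sharing only $x_{j^{\prime}}$ --- and in any case a connected subset of $\overline{D_{j^{\prime}}} \bigcap \overline{D_{\mathcal{S}}}$. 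Let $C_{0}$ be the connected component of $\overline{D_{j^{\prime}}} \bigcap \overline{D_{\mathcal{S}}}$ containing this subset, hence containing $x_{j^{\prime}}$; the goal becomes $C_{0}=\overline{D_{j^{\prime}}} \bigcap \overline{D_{\mathcal{S}}}$.

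Finally I would exclude any further component $C_{1}$ of the compact set $\overline{D_{j^{\prime}}} \bigcap \overline{D_{\mathcal{S}}}$. Since $D_{\mathcal{S}} \bigcap ({\mathbb{R}}^{2}-\overline{D_{j^{\prime}}})$ is, by Definition~\ref{def:4}, a connected component of ${\mathbb{R}}^{2}-{\bigcup}_{S_j \in {\mathcal{S}}^{\prime}} S_j$, such a $C_{1}$ is the closure of a non-empty union of components of the open set $D_{\mathcal{S}} \bigcap D_{j^{\prime}}$, so it has interior in ${\mathbb{R}}^{2}$ and therefore possesses a leftmost, a rightmost, a lowest and a highest point; moreover its topological boundary lies in $S_{j^{\prime}} \bigcup {\bigcup}_{S_j \in \mathcal{S}} S_j$. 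Thus each such extreme point is on $S_{j^{\prime}}$, or a transverse crossing of two curves of $\mathcal{S}$, or a singular point of the restriction of ${\pi}_{2,1,i}$ (for the appropriate $i \in \{1,2\}$) to the smooth part of $\overline{D_{\mathcal{S}}}-D_{\mathcal{S}}$; in the last two cases, unless the ambient curve is linear, the point lies in $F_{D_{\mathcal{S}},1} \bigcup F_{D_{\mathcal{S}},2} \subseteq A_{D_{\mathcal{S}}}$, whence the $(\mathcal{S},D_{\mathcal{S}},A_{D_{\mathcal{S}}})$-LS (resp. $(\mathcal{S},D_{\mathcal{S}},A_{D_{\mathcal{S}}})$-S) hypothesis forces it either to equal $x_{j^{\prime}}$ --- impossible, since $x_{j^{\prime}} \in C_{0}$ and $C_{1} \bigcap C_{0}=\emptyset$ --- or to share a coordinate with $x_{j^{\prime}}$. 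One then confronts these alignment constraints for the four extreme points of $C_{1}$ with the location of $x_{j^{\prime}}$ on its curve inside $D_{j^{\prime}}$, the residual possibilities (extreme loci lying on $S_{j^{\prime}}$, or on linear curves) being dispatched by the same bookkeeping together with connectedness of $\overline{D_{\mathcal{S}}}=\overline{D_{\mathcal{S}} \bigcap ({\mathbb{R}}^{2}-\overline{D_{j^{\prime}}})} \bigcup (\overline{D_{j^{\prime}}} \bigcap \overline{D_{\mathcal{S}}})$; this should contradict the existence of $C_{1}$. I expect this last exclusion step to be the only genuinely substantive point: the reduction and the local analysis at $x_{j^{\prime}}$ are immediate from Definitions~\ref{def:1}--\ref{def:4}, and the role of the hypothesis $x_{j^{\prime}} \in {\bigcup}_{S_j \in \mathcal{S}} S_j$ is precisely to anchor $C_{0}$ to the boundary of $D_{\mathcal{S}}$ so that this bookkeeping closes (in the regime, relevant to later sections, where $D_{j^{\prime}}$ is a sufficiently small disk about $x_{j^{\prime}}$ it is of course trivial, as then $\overline{D_{j^{\prime}}} \bigcap \overline{D_{\mathcal{S}}}$ is the half-disk above).
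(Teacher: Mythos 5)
Your reduction is the right one as far as it goes: by Definition \ref{def:4}, being $(\mathcal{S},D_{\mathcal{S}},A_{D_{\mathcal{S}}})$-PLS (resp. -PS) is exactly being -LS (resp. -S) together with connectedness of $\overline{D_{j^{\prime}}} \bigcap \overline{D_{\mathcal{S}}}$, so the whole content is that connectedness claim. But that is precisely where your argument stops being a proof: the exclusion of a further component $C_1$ is left as ``bookkeeping \dots should contradict the existence of $C_1$,'' and this step cannot be carried out from the hypotheses you allow yourself. Observe first that your standing hypothesis $x_{j^{\prime}} \in D_{j^{\prime}} \bigcap {\bigcup}_{S_j \in \mathcal{S}} S_j$ is no restriction at all: Definition \ref{def:4} already requires $x_{j^{\prime}} \in D_{j^{\prime}} \bigcap (\overline{D_{\mathcal{S}}}-D_{\mathcal{S}})$, and $\overline{D_{\mathcal{S}}}-D_{\mathcal{S}} \subset {\bigcup}_{S_j \in \mathcal{S}} S_j$ since $D_{\mathcal{S}}$ is a component of the complement of that closed union. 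Hence what you are actually attempting is ``every LS pointed set is PLS,'' and the paper itself exhibits the contrary: in Example \ref{ex:2} (FIGURE \ref{fig:3}) the thin ellipsoid through the common center of two concentric circles, with $x_{j^{\prime}}$ and $x_{j^{\prime},1}$ antipodal on the inner circle, is LS while $\overline{D_{j^{\prime}}} \bigcap \overline{D_{\mathcal{S}}}$ has two components; Theorem \ref{thm:2} is likewise devoted to LS pointed sets that are not $(\mathcal{S},D_{\mathcal{S}})$-connected. In that example your alignment constraints are satisfied and produce no contradiction: the extreme points of the second component cluster around $x_{j^{\prime},1}$, which shares its first coordinate with $x_{j^{\prime}}$, exactly as LS permits. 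So the exclusion step fails as designed (and, separately, your claim that any further component must have interior and extreme points lying in $A_{D_{\mathcal{S}}}$ is itself not justified). The conclusion can only hold under a reading of the hypothesis that is genuinely stronger than the literal one, e.g.\ something forcing the part of $\overline{D_{\mathcal{S}}}-D_{\mathcal{S}}$ or of ${\bigcup}_{S_j \in \mathcal{S}} S_j$ inside $D_{j^{\prime}}$ to be connected, in the spirit of the condition discussed just before Theorem \ref{thm:3}.

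For comparison, the paper gives no argument for Corollary \ref{cor:1} at all: it is asserted to follow ``immediately from the definition.'' So your detailed topological route (local half-disk at $x_{j^{\prime}}$, extreme-point analysis of putative extra components) is in any case not the paper's; more importantly, as written it does not prove the statement, and under the literal reading of the hypothesis no proof can exist because of Example \ref{ex:2}.
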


Most of our new result consists of checking examples of refined algebraic domains with poles of some of the defined classes.

We assume knowledge on fundamental arguments from singularity theory and real algebraic geometry.
For singularity theory of differentiable maps, consult \cite{golubitskyguillemin} for example. \cite{bochnakcosteroy, elredge, kollar, lellis} are for real polynomials, especially, their approximation theory, and general theory of real algebraic geometry. 

As a kind of elementary mathematics, we also assume knowledge on geometry of the Euclidean plane ${\mathbb{R}}^2$, for example. For example, we need the notion of {\it parallel} subsets there. This also comes from the structure of the Riemannian manifold. We also need the notion of similarity.

An {\it ellipsoid of ${\mathbb{R}}^2$ centered at a point $x_0=(x_{0,1},x_{0,2}) \in {\mathbb{R}}^2$} means a set of the form $\{x=(x_1,x_2) \in {\mathbb{R}}^2 \mid a_1{(x_1-x_{0,1})}^2+a_2{(x_2-x_{0,2})}^2 \leq r\}$ with $a_1,a_2,r>0$ or a subset obtained by rotating the set around the point $x_0=(x_{0,1},x_{0,2}) \in {\mathbb{R}}^2$. We call an ellipsoid centered at $x_0$ of the form $\{x=(x_1,x_2) \in {\mathbb{R}}^2 \mid a_1{(x_1-x_{0,1})}^2+a_2{(x_2-x_{0,2})}^2 \leq r\}$ with $a_1,a_2,r>0$ an {\it ellipsoid of the standard form}. We use these terminologies on ellipsoids for the interiors of these closed sets in ${\mathbb{R}}^2$. 
As a specific case of the boundaries of ellipsoids (centered at some points), a {\it circle of ${\mathbb{R}}^2$ centered at a point $x_0=(x_{0,1},x_{0,2}) \in {\mathbb{R}}^2$} means a set of the form $\{x=(x_1,x_2) \in {\mathbb{R}}^2 \mid {(x_1-x_{0,1})}^2+{(x_2-x_{0,2})}^2 \leq r\}$ with $r>0$.
\begin{Thm}
\label{thm:1}
In Definition \ref{def:4}, for an arbitrary point $x_{j^{\prime}}:=({x_{j^{\prime}}}_1,{x_{j^{\prime}}}_2) \in D_{j^{\prime}} \bigcap (\overline{D_{\mathcal{S}}}-D_{\mathcal{S}}) \subset {\mathbb{R}}^2$ and any point $x_{j^{\prime},1}:=(x_{j^{\prime},1,1},x_{j^{\prime},1,2}) \in {\mathbb{R}}^2-(F_{x_{j^{\prime}}} \bigcup \{x_{j^{\prime}}\})$ with a suitably chosen union $F_{x_{j^{\prime}}}$ of finitely many straight lines, we can have a pointed set $(D_{j^{\prime}},x_{j^{\prime}})$ with $x_{j^{\prime}} \in D_{j^{\prime}} \bigcap (\overline{D_{\mathcal{S}}}-D_{\mathcal{S}})$ enjoying the following.
\begin{enumerate}
\item The pointed set $(D_{j^{\prime}},x_{j^{\prime}})$ is $(\mathcal{S},D_{\mathcal{S}},A_{D_{\mathcal{S}}})$-LS.
\item The set $D_{j^{\prime}}$ is an ellipsoid centered at the point $x_{j^{\prime},0}:=(\frac{{x_{j^{\prime}}}_1+x_{j^{\prime},1,1}}{2},\frac{{x_{j^{\prime}}}_2+x_{j^{\prime},1,2}}{2}) \in {\mathbb{R}}^2$ and containing the straight segment connecting $x_{j^{\prime}}$ and $x_{j^{\prime},1}$.
\end{enumerate}
In addition, the pointed set $(D_{j^{\prime}},x_{j^{\prime}})$ with $x_{j^{\prime}}=({x_{j^{\prime}}}_1,{x_{j^{\prime}}}_2) \in D_{j^{\prime}} \bigcap (\overline{D_{\mathcal{S}}}-D_{\mathcal{S}})$ can be chosen to be $(\mathcal{S},D_{\mathcal{S}},A_{D_{\mathcal{S}}})$-S if the following are satisfied.
\begin{enumerate}
	\setcounter{enumi}{2}
\item The straight segment connecting $x_{j^{\prime}}$ and $x_{j^{\prime},1}$ 
 is parallel to a segment in $\mathbb{R} \times \{0\} \subset {\mathbb{R}}^2$ or  $\{0\} \times \mathbb{R} \subset {\mathbb{R}}^2$.
\item The straight segment connecting $x_{j^{\prime}}$ and $x_{j^{\prime},1}$ contains at most two points from $\overline{D_{\mathcal{S}}}${\rm :} the two points must be from $\{x_{j^{\prime}}, x_{j^{\prime},1}\} \bigcap (\overline{D_{\mathcal{S}}}-D_{\mathcal{S}})$.
\end{enumerate}
\end{Thm}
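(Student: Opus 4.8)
The plan is to produce $D_{j^{\prime}}$ explicitly as a long, very thin ellipsoid straddling the segment $[x_{j^{\prime}},x_{j^{\prime},1}]$; no real algebraic approximation is needed, an ellipsoid already being cut out by a real polynomial of degree $2$. Write $v:=x_{j^{\prime},1}-x_{j^{\prime}}$ and, as in the statement, let $x_{j^{\prime},0}$ be the midpoint of $x_{j^{\prime}}$ and $x_{j^{\prime},1}$; for small parameters $\varepsilon,\eta>0$ let $D_{j^{\prime}}$ be the interior of the ellipsoid centred at $x_{j^{\prime},0}$ whose major axis lies along $v$ with semilength $\tfrac{1+\eta}{2}\lVert v\rVert$ and whose minor semi-axis is $\varepsilon$, and set $S_{j^{\prime}}:=\partial D_{j^{\prime}}$. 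The factor $1+\eta$ makes the closed segment lie in the open ellipsoid, so part (2) holds by construction. For $A_{j^{\prime}}$ take the at most four points at which ${\pi}_{2,1,i}{\mid}_{S_{j^{\prime}}}$ ($i=1,2$) is singular together with the finitely many points of $S_{j^{\prime}}\bigcap\bigcup_{S_j\in\mathcal{S}}S_j$; this is a finite set containing $F_{D_{j^{\prime}},1}\bigcup F_{D_{j^{\prime}},2}$, so $(\{S_{j^{\prime}}\},D_{j^{\prime}},A_{j^{\prime}})$ is a refined algebraic domain with poles.

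I would then take $F_{x_{j^{\prime}}}$ to be the union of the following finitely many lines: the finitely many lines through $x_{j^{\prime}}$ tangent to some $S_j\in\mathcal{S}$; the tangent line(s) at $x_{j^{\prime}}$ to the curve(s) of $\mathcal{S}$ through $x_{j^{\prime}}$; the line joining $x_{j^{\prime}}$ to each of the finitely many points where two curves of $\mathcal{S}$ meet in $\overline{D_{\mathcal{S}}}$; the line joining $x_{j^{\prime}}$ to each $p\in A_{D_{\mathcal{S}}}\setminus\{x_{j^{\prime}}\}$ that is neither horizontally nor vertically aligned with $x_{j^{\prime}}$; and the vertical lines $\{x_1=a\}$ for $a\in{\pi}_{2,1,1}(A_{D_{\mathcal{S}}})\setminus\{{x_{j^{\prime}}}_1\}$ and the horizontal lines $\{x_2=b\}$ for $b\in{\pi}_{2,1,2}(A_{D_{\mathcal{S}}})\setminus\{{x_{j^{\prime}}}_2\}$. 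For $x_{j^{\prime},1}$ off this set one reads off that $x_{j^{\prime},1}\notin A_{D_{\mathcal{S}}}$, that the line carrying the segment meets every $S_j$ transversally and avoids every existing double point, and that the only points of $A_{D_{\mathcal{S}}}\setminus\{x_{j^{\prime}}\}$ able to lie on the open segment are of the form $({x_{j^{\prime}}}_1,\cdot)$ or $(\cdot,{x_{j^{\prime}}}_2)$. Since the coordinate lines through $x_{j^{\prime}}$ itself are not excluded, hypothesis (3) stays compatible with $x_{j^{\prime},1}\notin F_{x_{j^{\prime}}}$.

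For part (1) I would use that $\overline{D_{j^{\prime}}}$ converges, in the Hausdorff sense, to $[x_{j^{\prime}},x_{j^{\prime},1}]$ as $\varepsilon,\eta\to 0$: since $A_{D_{\mathcal{S}}}$ is finite, for $\varepsilon,\eta$ small every point of $A_{D_{\mathcal{S}}}\setminus\{x_{j^{\prime}}\}$ off the segment is expelled from $\overline{D_{j^{\prime}}}$, while those on the segment are axis-aligned with $x_{j^{\prime}}$, so $A_{D_{\mathcal{S}}}\bigcap\overline{D_{j^{\prime}}}\bigcap\overline{D_{\mathcal{S}}}\setminus\{x_{j^{\prime}}\}$ has the shape demanded by $(\mathcal{S},D_{\mathcal{S}},A_{D_{\mathcal{S}}})$-LS. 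For the final clause, add hypotheses (3) and (4): then (4) forces the segment to meet $\overline{D_{\mathcal{S}}}$ only at $x_{j^{\prime}}$ and possibly at $x_{j^{\prime},1}\in\overline{D_{\mathcal{S}}}-D_{\mathcal{S}}$, so $\overline{D_{j^{\prime}}}\bigcap\overline{D_{\mathcal{S}}}$ shrinks into arbitrarily small discs about these one or two points; by (3) one of the two coordinate projections of $\overline{D_{j^{\prime}}}$ is an arbitrarily short interval about the matching coordinate of $x_{j^{\prime}}$ (and of $x_{j^{\prime},1}$), and the coordinate-line part of $F_{x_{j^{\prime}}}$ together with $x_{j^{\prime},1,i}\neq{x_{j^{\prime}}}_i$ keeps ${\pi}_{2,1,i}(A_{D_{\mathcal{S}}})$ out of small neighbourhoods of those values except at ${x_{j^{\prime}}}_i$. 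Shrinking $\varepsilon,\eta$ accordingly then yields ${\pi}_{2,1,i}(A_{D_{\mathcal{S}}})\bigcap{\pi}_{2,1,i}(\overline{D_{j^{\prime}}}\bigcap\overline{D_{\mathcal{S}}})\setminus\{{x_{j^{\prime}}}_i\}=\emptyset$ for $i=1,2$, i.e.\ the $(\mathcal{S},D_{\mathcal{S}},A_{D_{\mathcal{S}}})$-S property, which a posteriori also gives (1) with the empty set.

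The step I expect to be the main obstacle is certifying that $({\mathcal{S}}^{\prime},D_{{\mathcal{S}}^{\prime}},A_{D_{{\mathcal{S}}^{\prime}}})$ is again a refined algebraic domain with poles. The tangent-line and double-point parts of $F_{x_{j^{\prime}}}$ with $\varepsilon$ small make $S_{j^{\prime}}$ meet each old curve transversally, at finitely many points each lying on a single old curve and away from old double points; these points were put into $A_{j^{\prime}}$, and the normal-crossing condition and $F_{D_{{\mathcal{S}}^{\prime}},1}\bigcup F_{D_{{\mathcal{S}}^{\prime}},2}\subset A_{D_{{\mathcal{S}}^{\prime}}}$ then propagate from the corresponding facts for $(\mathcal{S},D_{\mathcal{S}},A_{D_{\mathcal{S}}})$. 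The delicate issue is that $D_{{\mathcal{S}}^{\prime}}=D_{\mathcal{S}}\bigcap({\mathbb{R}}^2-\overline{D_{j^{\prime}}})$ must remain connected, i.e.\ the thin convex sliver $\overline{D_{j^{\prime}}}$ must not cut $D_{\mathcal{S}}$ apart. Near $x_{j^{\prime}}$ (and near $x_{j^{\prime},1}$ if it too lies on $\partial D_{\mathcal{S}}$) this is local: deleting a thin boundary ``bite'' from a half-disc leaves it connected, and the case where $x_{j^{\prime}}$ is a double point is covered by the same analysis applied to the two branches. Under hypothesis (4) the sliver touches $\overline{D_{\mathcal{S}}}$ only near those one or two points, so connectedness is transparent for the $S$-refinement; in the bare $LS$ case the point I would check most carefully is that, after shrinking $\varepsilon$ (and if needed enlarging $F_{x_{j^{\prime}}}$ by further lines constraining $v$), the sliver never runs clear across $D_{\mathcal{S}}$. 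Everything else is the routine limiting argument sketched above.
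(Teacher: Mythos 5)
Your construction is essentially the paper's own proof: the paper likewise takes a thin ellipsoid centred at the midpoint $x_{j^{\prime},0}$ containing the segment, uses the finiteness of $A_{D_{\mathcal{S}}}$ to choose $F_{x_{j^{\prime}}}$ so that the ellipsoid can be taken to meet $A_{D_{\mathcal{S}}}$ in at most the point $x_{j^{\prime}}$ (you allow, equivalently for the LS condition, additional axis-aligned points), and declares the S-case under hypotheses (3)--(4) immediate from the definition, which your Hausdorff-shrinking argument in $\varepsilon,\eta$ merely spells out. The connectivity of $D_{{\mathcal{S}}^{\prime}}$ and the normal-crossing/transversality checks you flag at the end are likewise left unaddressed in the paper's two-sentence proof, so they do not represent a divergence from the paper's argument.
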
 
\begin{proof}
By the finiteness, for a suitably chosen union $F_{x_{j^{\prime}}}$ of finitely many straight lines and any point $x_{j^{\prime},1}:=(x_{j^{\prime},1,1},x_{j^{\prime},1,2}) \in {\mathbb{R}}^2-(F_{x_{j^{\prime}}} \bigcup \{x_{j^{\prime}}\})$, we can see that our ellipsoid centered at the point $x_{j^{\prime},0}:=(\frac{{x_{j^{\prime}}}_1+x_{j^{\prime},1,1}}{2},\frac{{x_{j^{\prime}}}_2+x_{j^{\prime},1,2}}{2}) \in {\mathbb{R}}^2$ can be chosen as one containing at most one point from $A_{D_{\mathcal{S}}}$, which is $x_{j^{\prime}}:=({x_{j^{\prime}}}_1,{x_{j^{\prime}}}_2) \in {\mathbb{R}}^2$. We have the additional result immediately from our definition.
\end{proof}
\section{The case where the families $\mathcal{S}$ and ${\mathcal{S}}^{\prime}$ of Definition \ref{def:4} consist of circles.}

We first show an important example.
\begin{Ex}
\label{ex:1}
FIGURE \ref{fig:1} shows a region surrounded by two circles centered at a same point $x_0$, whose radii are sufficiently close, a disk $D_{j^{\prime}}$, colored in blue, and the black colored boundary of the closed disk $\overline{D_{j^{\prime}}}$.
This is $(\mathcal{S},D_{\mathcal{S}},A_{D_{\mathcal{S}}}:=F_{D_{\mathcal{S}},1} \bigcup F_{D_{\mathcal{S}},2})$-PS.

\begin{figure}
	\includegraphics[width=80mm,height=80mm]{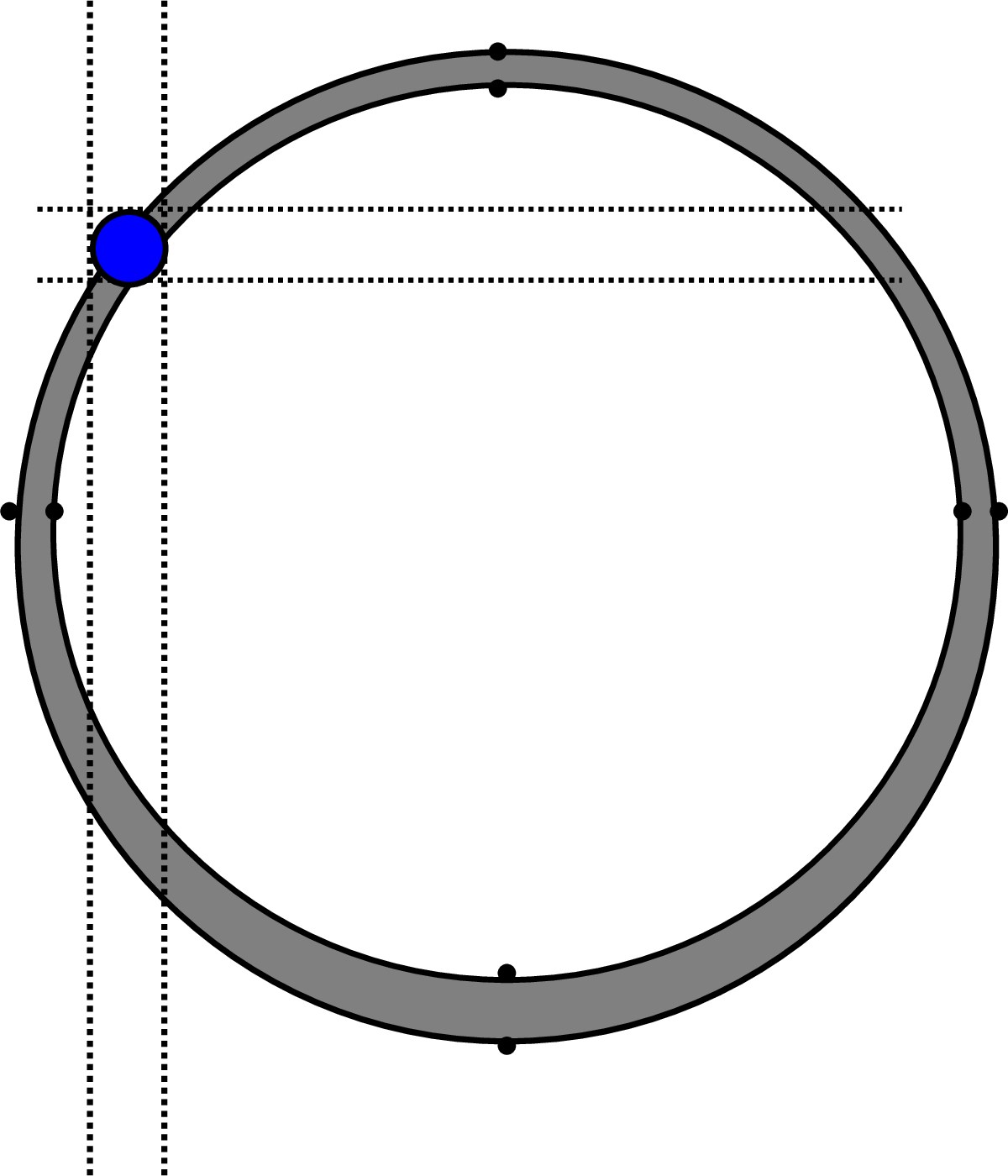}
	\caption{A region surrounded by two concentric circles whose radii are sufficiently close, a closed disk $D_{j^{\prime}}$, colored in blue, and the black colored boundary of the closed disk $\overline{D_{j^{\prime}}}$. Black dots are for points of $F_{D_{\mathcal{S}},1} \bigcup F_{D_{\mathcal{S}},2}$.}
	\label{fig:1}
\end{figure}
Note that this cannot be regarded as one from any family ${\mathcal{S}}^{\prime}$ from existing studies \cite{kitazawa5, kitazawa6, kitazawa8}. In these studies, we have considered the case a circle of a sufficiently small radius is put. We have investigated local changes of the Poincar\'e-Reeb V-digraphs for ${\pi}_{2,1,i}$. This is "small" under our new definition.
\end{Ex}

\begin{Thm}
\label{thm:2}
Let the families $\mathcal{S}$ and ${\mathcal{S}}^{\prime}$ in Definition \ref{def:4} consist of circles. Let $A_{D_{\mathcal{S}}}:=F_{D_{\mathcal{S}},1} \bigcup F_{D_{\mathcal{S}},2}$.
If the pointed set $(D_{j^{\prime}},x_{j^{\prime}})$ with $x_{j^{\prime}}=({x_{j^{\prime}}}_1,{x_{j^{\prime}}}_2) \in D_{j^{\prime}} \bigcap (\overline{D_{\mathcal{S}}}-D_{\mathcal{S}})$ is $(\mathcal{S},D_{\mathcal{S}},A_{D_{\mathcal{S}}})$-LS, and the set $D_{j^{\prime}}$ is not $(\mathcal{S},D_{\mathcal{S}})$-connected, then the following are satisfied.
\begin{enumerate}
\item \label{thm:2.1}
 The intersection $\overline{D_{j^{\prime}}} \bigcap (\overline{D_{\mathcal{S}}}-D_{\mathcal{S}})$ contains no circle.
\item \label{thm:2.2} The intersection $\overline{D_{j^{\prime}}} \bigcap (\overline{D_{\mathcal{S}}}-D_{\mathcal{S}})$ contains at least $l=2$ points contained in exactly two circles from $\mathcal{S}$. Furthermore, the number $l=2$ is optimal.
\end{enumerate}
\end{Thm}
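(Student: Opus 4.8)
\textbf{Proof plan for Theorem \ref{thm:2}.}

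The plan is to exploit the hypothesis that $D_{j'}$ fails to be $(\mathcal{S},D_{\mathcal{S}})$-connected, i.e.\ that $\overline{D_{j'}} \cap \overline{D_{\mathcal{S}}}$ is disconnected, together with the fact that $D_{j'}$ is a disk bounded by a circle $S_{j'}$ and the curves in $\mathcal{S}$ are circles. First I would set up the local picture: since $x_{j'} \in D_{j'} \cap (\overline{D_{\mathcal{S}}} - D_{\mathcal{S}})$, the closed disk $\overline{D_{j'}}$ genuinely meets the boundary locus $\overline{D_{\mathcal{S}}} - D_{\mathcal{S}}$, which is a union of arcs of circles from $\mathcal{S}$. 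The boundary $S_{j'}$ of $\overline{D_{j'}}$ meets each circle $S_j \in \mathcal{S}$ in at most two points (two distinct circles intersect in at most two points), and by the normal-crossing condition in Definition \ref{def:1}(2) these intersections are transverse. The disconnectedness of $\overline{D_{j'}} \cap \overline{D_{\mathcal{S}}}$ forces the boundary circle $S_{j'}$ to cross in and out of $\overline{D_{\mathcal{S}}}$ at least twice, hence to meet $\bigcup_{S_j \in \mathcal{S}} S_j$ in at least two points; I would argue that in fact at least two of these crossing points lie in $\overline{D_{j'}} \cap (\overline{D_{\mathcal{S}}} - D_{\mathcal{S}})$, and that a point where $S_{j'}$ meets two circles simultaneously is exactly the kind of point counted in part \eqref{thm:2.2}. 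The subtlety is to show these crossing points can be taken to be double points (lying on two circles of $\mathcal{S}$): here the LS hypothesis is used, because it restricts $A_{D_{\mathcal{S}}} \cap \overline{D_{j'}} \cap \overline{D_{\mathcal{S}}}$ to lie on the two axis-parallel lines through $x_{j'}$, which combined with the geometry of circles pins down the combinatorics.

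For part \eqref{thm:2.1}, I would argue by contradiction: if $\overline{D_{j'}} \cap (\overline{D_{\mathcal{S}}} - D_{\mathcal{S}})$ contained a whole circle $S_j \in \mathcal{S}$, then $S_j \subset \overline{D_{j'}}$, so $S_j$ bounds a disk inside the closed disk $\overline{D_{j'}}$; since $S_j$ is part of the boundary of $D_{\mathcal{S}}$ and $x_{j'} \in \overline{D_{\mathcal{S}}}$, one can trace the local structure of $\overline{D_{\mathcal{S}}}$ near $S_j$ and near $x_{j'}$ to show that $\overline{D_{j'}} \cap \overline{D_{\mathcal{S}}}$ would then be connected (it would be an annular or disk-like neighborhood of $S_j$ together with the portion near $x_{j'}$, all joined up), contradicting the hypothesis. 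Alternatively, and perhaps more cleanly, containing a full circle $S_j$ in $\overline{D_{j'}}$ together with the nonemptiness condition $\overline{D_{\mathcal{S}}} \cap S_j \neq \emptyset$ from Definition \ref{def:1}(1) and the fact that $D_{\mathcal{S}}$ is the \emph{bounded} component would be incompatible with $D_{j'}$ being disconnected from the main body of $\overline{D_{\mathcal{S}}}$ in the required sense; I expect a short topological argument using the Jordan curve theorem suffices.

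For part \eqref{thm:2.2}, the lower bound $l \geq 2$ comes from the crossing-count argument above: disconnectedness of $\overline{D_{j'}} \cap \overline{D_{\mathcal{S}}}$ means $S_{j'}$ enters and leaves $\overline{D_{\mathcal{S}}}$ in at least two separate arcs, giving at least $2$ transition points on $\bigcup S_j$, and I would show each such transition where the local picture of $\overline{D_{\mathcal{S}}} - D_{\mathcal{S}}$ meets $S_{j'}$ must be a point lying on (at least) two circles of $\mathcal{S}$ — otherwise, near a single-circle boundary arc, $\overline{D_{j'}} \cap \overline{D_{\mathcal{S}}}$ would not actually separate, using that $x_{j'}$ is already interior to $\overline{D_{\mathcal{S}}}-D_{\mathcal{S}}$ and the LS condition controls where extra intersection points with $A_{D_{\mathcal{S}}}$ can sit. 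For optimality of $l=2$, I would simply produce an explicit example: two nearly-concentric circles as in Example \ref{ex:1}, but with the added disk $D_{j'}$ positioned so that its boundary crosses the thin annular region $\overline{D_{\mathcal{S}}}$ transversally, creating exactly two double points and a disconnected intersection — a routine but necessary drawing. The main obstacle I anticipate is the honest verification that every ``transition point'' of $S_{j'}$ in and out of $\overline{D_{\mathcal{S}}}$ is forced to be a double point rather than just a point on a single circle; this requires carefully combining the normal-crossing condition of Definition \ref{def:1}, the LS hypothesis, and the global topology of $\overline{D_{\mathcal{S}}}$ near $x_{j'}$, and is where the circle-specific hypothesis (as opposed to general curves) does real work.
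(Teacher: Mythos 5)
Your plan for part (\ref{thm:2.2}) miscounts which points the theorem is about, and this breaks both halves of that part. The points to be produced are points contained in exactly two circles \emph{from $\mathcal{S}$}, i.e.\ double points of the original arrangement lying in $\overline{D_{j^{\prime}}} \bigcap (\overline{D_{\mathcal{S}}}-D_{\mathcal{S}})$; they are not the ``transition points'' where the new boundary circle $S_{j^{\prime}}=\overline{D_{j^{\prime}}}-D_{j^{\prime}}$ crosses in and out of $\overline{D_{\mathcal{S}}}$. Since $S_{j^{\prime}}$ does not belong to $\mathcal{S}$, a generic crossing of $S_{j^{\prime}}$ with $\bigcup_{S_j \in \mathcal{S}} S_j$ lies on exactly one circle of $\mathcal{S}$, so your key claim that every such transition point ``must be a point lying on (at least) two circles of $\mathcal{S}$'' is false; in the paper's own optimality picture (FIGURE \ref{fig:2}) the circle $S_{j^{\prime}}$ crosses $S_1$ and $S_2$ only at simple points, and the two counted double points are the points of $S_1 \bigcap S_2$ sitting in the \emph{interior} of $D_{j^{\prime}}$. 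The paper's argument runs instead through the connected components of $\overline{D_{j^{\prime}}} \bigcap (\overline{D_{\mathcal{S}}}-D_{\mathcal{S}})$ that separate the disk: assuming at most one double point, one finds two separating chains $C_{\mathcal{S},1}$, $C_{\mathcal{S},2}$ lying on two \emph{distinct} circles of $\mathcal{S}$ (distinctness because a circle of $\mathcal{S}$ meets $S_{j^{\prime}}$ in at most two points), and then the dichotomy ``these two circles meet / do not meet'' is contradicted using the connectedness of $D_{\mathcal{S}}$ and elementary circle-intersection geometry. Your proposal contains no substitute for this step.

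Your optimality example also does not work. With two (nearly) concentric circles there are no points lying on two circles of $\mathcal{S}$ at all, so such a configuration could never witness that $l=2$ is attained; moreover a disk whose boundary crosses the thin annulus transversally has \emph{connected} intersection with $\overline{D_{\mathcal{S}}}$ (with only two concentric boundary circles, disconnection would force $\overline{D_{j^{\prime}}}$ to contain the inner circle, which is excluded), so the hypothesis ``not $(\mathcal{S},D_{\mathcal{S}})$-connected'' fails. The paper instead takes three circles: two congruent overlapping circles inside a larger one, with $D_{j^{\prime}}$ a small disk containing the two-point intersection $F_{\mathcal{S}}$ of the overlapping circles and $x_{j^{\prime}} \in F_{\mathcal{S}}$; this satisfies LS (the two double points are vertically aligned through $x_{j^{\prime}}$), is not $(\mathcal{S},D_{\mathcal{S}})$-connected, and has exactly two double points. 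Finally, a smaller discrepancy: for part (\ref{thm:2.1}) the paper deduces the absence of a full circle directly from the LS condition (a whole circle of $\mathcal{S}$ inside $\overline{D_{j^{\prime}}}$ would bring its axis-extreme points, which lie in $A_{D_{\mathcal{S}}}=F_{D_{\mathcal{S}},1} \bigcup F_{D_{\mathcal{S}},2}$, into the intersection in positions violating LS), whereas your route via non-connectedness and the Jordan curve theorem is not clearly viable, since containing a circle of $\mathcal{S}$ does not by itself force $\overline{D_{j^{\prime}}} \bigcap \overline{D_{\mathcal{S}}}$ to be connected.
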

\begin{proof}
The first property (\ref{thm:2.1}) immediately follows from the assumption that the pointed set $(D_{j^{\prime}},x_{j^{\prime}})$ with $x_{j^{\prime}}=({x_{j^{\prime}}}_1,{x_{j^{\prime}}}_2) \in D_{j^{\prime}} \bigcap (\overline{D_{\mathcal{S}}}-D_{\mathcal{S}})$ is $(\mathcal{S},D_{\mathcal{S}},A_{D_{\mathcal{S}}})$-LS and our definition.

We check (\ref{thm:2.2}). Suppose that the set $D_{j^{\prime}}$ is not $(\mathcal{S},D_{\mathcal{S}})$-connected and that the intersection $\overline{D_{j^{\prime}}} \bigcap (\overline{D_{\mathcal{S}}}-D_{\mathcal{S}})$ contains at most $l=1$ point. Under this assumption, we have the following two connected components $C_{\mathcal{S},1}$ and $C_{\mathcal{S},2}$ of $\overline{D_{j^{\prime}}} \bigcap (\overline{D_{\mathcal{S}}}-D_{\mathcal{S}})$.
\begin{itemize}
\item A smooth connected curve $C_{\mathcal{S},1}$ which connects two points of $(\overline{D_{j^{\prime}}}-D_{j^{\prime}}) \bigcap (\overline{D_{\mathcal{S}}}-D_{\mathcal{S}})$ and whose interior is in $D_{j^{\prime}} \bigcap (\overline{D_{\mathcal{S}}}-D_{\mathcal{S}})$.
\item Another curve $C_{\mathcal{S},2}$ which is piecewise smooth and connected, and connects two points of $(\overline{D_{j^{\prime}}}-D_{j^{\prime}}) \bigcap (\overline{D_{\mathcal{S}}}-D_{\mathcal{S}})$, and whose interior is in $D_{j^{\prime}} \bigcap (\overline{D_{\mathcal{S}}}-D_{\mathcal{S}})$ and contains a point contained in exactly two circles from $\mathcal{S}$. 
\end{itemize}
We can also choose these two curves in such a way that the following hold.
\begin{itemize}
\item In addition, $C_{\mathcal{S},1}$ and $C_{\mathcal{S},2}$ are disjoint and divide the disk $\overline{D_{j^{\prime}}}$ into three connected closed subsets $D_{j} \subset \overline{D_{j^{\prime}}}$ ($j=0,1,2$) satisfying the following conditions.
\begin{itemize}
\item The intersection $D_1 \bigcap D_2$ is empty.
\item The intersections $D_0 \bigcap D_1$ and $D_0 \bigcap D_2$ are non-empty.
\end{itemize}
\item For each $j=1,2$, the intersection of the component $D_0$ and a small neighborhood $N(C_{\mathcal{S},j}) \subset \overline{D_{j^{\prime}}}$ of $C_{\mathcal{S},j}$ and the open set $D_{\mathcal{S}}$ is disjoint. 
\item For each $j=1,2$, the intersection of the component $D_j$, the small neighborhood $N(C_{\mathcal{S},j}) \subset \overline{D_{j^{\prime}}}$ of $C_{\mathcal{S},j}$, and the open set $D_{\mathcal{S}}$ is non-empty. 
\end{itemize}
The curve $C_{\mathcal{S},1}$ and a smooth curve ${C_{\mathcal{S},2}}^{\prime} \subset C_{\mathcal{S},2}$ are in unique circles $S_{\mathcal{S},1}$ and $S_{\mathcal{S},2}$ from $\mathcal{S}$, respectively. We can also see that for these smooth curves $C_{\mathcal{S},1}$ and ${C_{\mathcal{S},2}}^{\prime}$ the circles are distinct: if they are not distinct, then the circle of $\mathcal{S}$ and the circle $\overline{D_{j^{\prime}}}-D_{j^{\prime}}$ must intersect at a finite set containing at least $3$ points and this is a contradiction.
Suppose that $S_{\mathcal{S},1}$ and $S_{\mathcal{S},2}$ do not intersect in any non-empty finite set in ${\mathbb{R}}^2$. By considering the conditions above, the region $D_{\mathcal{S}}$ must not be connected.
By considering the conditions above and properties on the shape of $D_{\mathcal{S}}$, $S_{\mathcal{S},1}$ and $S_{\mathcal{S},2}$ cannot intersect in any non-empty finite set in ${\mathbb{R}}^2$. This is a kind of elementary arguments on the intersection of two circles.
For such an intersection of circles and the shapes of regions like $D_{\mathcal{S}}$, check also \cite[FIGURE 7]{kitazawa5}, for example: we do not assume non-trivial arguments from the preprint \cite{kitazawa5}. 

This is a contradiction. We have checked $l \geq 2$ must hold.

We see that the assumption on the number $l \geq 2$ is optimal.

We consider the family $\mathcal{S}$ of circles consisting of exactly three circles and satisfying the following.
\begin{itemize}
\item Two circles $S_1$ and $S_2$ from $\mathcal{S}$ are in the bounded connected component of the complementary set ${\mathbb{R}}^2-S_0$ of the remaining circle $S_0$ of $\mathcal{S}$.
\item The two circles $S_1$ and $S_2$ of $\mathcal{S}$ above are of a same radius and centered at two points in a line parallel to the horizontal line $\mathbb{R} \times \{0\}$ and they intersect at a two-point set $F_{\mathcal{S}}$. 
\item The region $D_{\mathcal{S}}$ is the unique connected component of the complementary set ${\mathbb{R}}^2-{\bigcup}_{S_j \in \mathcal{S}} S_j$ of ${\bigcup}_{S_j \in \mathcal{S}} S_j$.
\end{itemize}
We put a disk $D_{j^{\prime}}$ with a point $x_{j^{\prime}} \in D_{j^{\prime}}$ contained in the two circles $S_1$ and $S_2$ from $\mathcal{S}$. We put $D_{j^{\prime}}$ in such a way that the two-point set $F_{\mathcal{S}}$ before is contained there and that the circle $\overline{D_{j^{\prime}}}-D_{j^{\prime}}$ of the boundary of $\overline{D_{j^{\prime}}}$ is centered at a point in the unique straight segment connecting the two points of $F_{\mathcal{S}}$ and of a sufficiently small radius. We can have the set $D_{j^{\prime}} \bigcap (\overline{D_{\mathcal{S}}}-D_{\mathcal{S}})$ containing no point from $A_{D_{\mathcal{S}}}$ except the set $F_{\mathcal{S}}$.
Check also FIGURE \ref{fig:2}.
\begin{figure}
	\includegraphics[width=80mm,height=80mm]{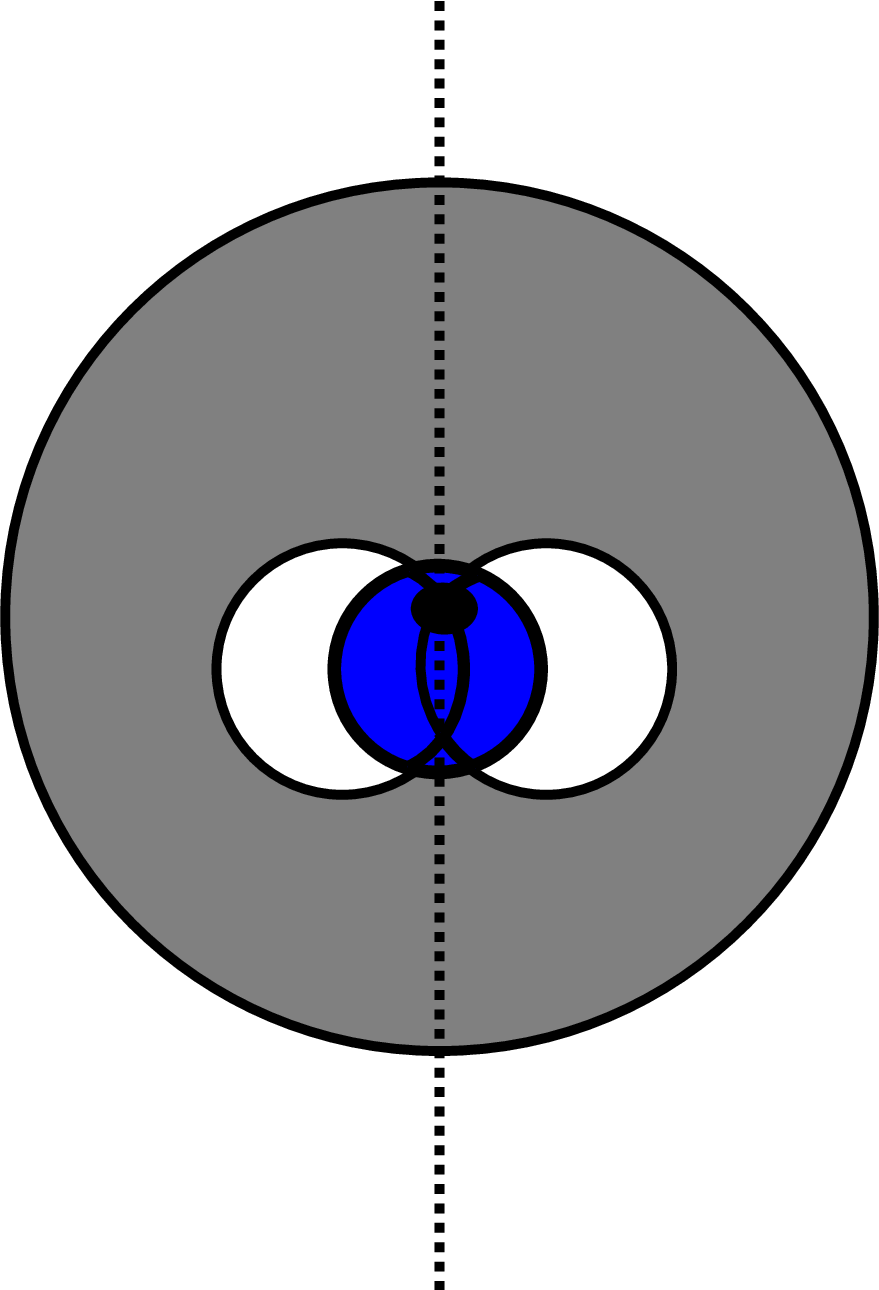}
	\caption{The family $\mathcal{S}$ of circles consists of exactly three circles.
Two of the circles are of a same radius and centered at two points in a line parallel to the horizontal line $\mathbb{R} \times \{0\}$. 
 The disk $D_{j^{\prime}}$ with a point $x_{j^{\prime}} \in D_{j^{\prime}}$ contained in the two circles from $\mathcal{S}$ is added. The region $D_{\mathcal{S}}$ ($D_{\mathcal{S}} \bigcap ({\mathbb{R}}^2-\overline{D_{j^{\prime}}})$) is colored in gray and the disk $D_{j^{\prime}}$ is colored in blue. The dotted line is parallel to the vertical line $\{0\} \times \mathbb{R} \subset {\mathbb{R}}^2$.}
	\label{fig:2}
\end{figure}
We have checked that the assumption on the number $l \geq 2$ is optimal.
This completes our proof.

\end{proof}
We present another simple case related to Theorems \ref{thm:1} and \ref{thm:2}.
\begin{Ex}
	\label{ex:2}
We consider a region $D_{\mathcal{S}}$ surrounded by two circles from $\mathcal{S}$, consisting of the two circles, centered at a same point $x_{0}$, as Example \ref{ex:1}. We can choose an ellipsoid $D_{j^{\prime}}$ centered at $x_{0}$ for (the additional statement of) Theorem \ref{thm:1}: $x_{j^{\prime}}=({x_{j^{\prime}}}_1,{x_{j^{\prime}}}_2) \in D_{j^{\prime}} \bigcap (\overline{D_{\mathcal{S}}}-D_{\mathcal{S}})$ and 
$x_{j^{\prime},1}:=(x_{j^{\prime},1,1},x_{j^{\prime},1,2}) \in D_{j^{\prime}} \bigcap (\overline{D_{\mathcal{S}}}-D_{\mathcal{S}})$ with $x_{j^{\prime},0}:=(\frac{{x_{j^{\prime}}}_1+x_{j^{\prime},1,1}}{2},\frac{{x_{j^{\prime}}}_2+x_{j^{\prime},1,2}}{2})=x_0 \in {\mathbb{R}}^2$ where $x_{j^{\prime}}$ and 
$x_{j^{\prime},1}$ are on the inner circle of $\mathcal{S}$. The pointed set $(D_{j^{\prime}},x_{j^{\prime}})$ with $x_{j^{\prime}}=({x_{j^{\prime}}}_1,{x_{j^{\prime}}}_2) \in D_{j^{\prime}} \bigcap (\overline{D_{\mathcal{S}}}-D_{\mathcal{S}})$ is $(\mathcal{S},D_{\mathcal{S}},F_{D_{\mathcal{S}},1} \bigcup F_{D_{\mathcal{S}},2})$-LS and it is not $(\mathcal{S},D_{\mathcal{S}},F_{D_{\mathcal{S}},1} \bigcup F_{D_{\mathcal{S}},2})$-PLS. See also FIGURE \ref{fig:3}.
\begin{figure}
	\includegraphics[width=80mm,height=80mm]{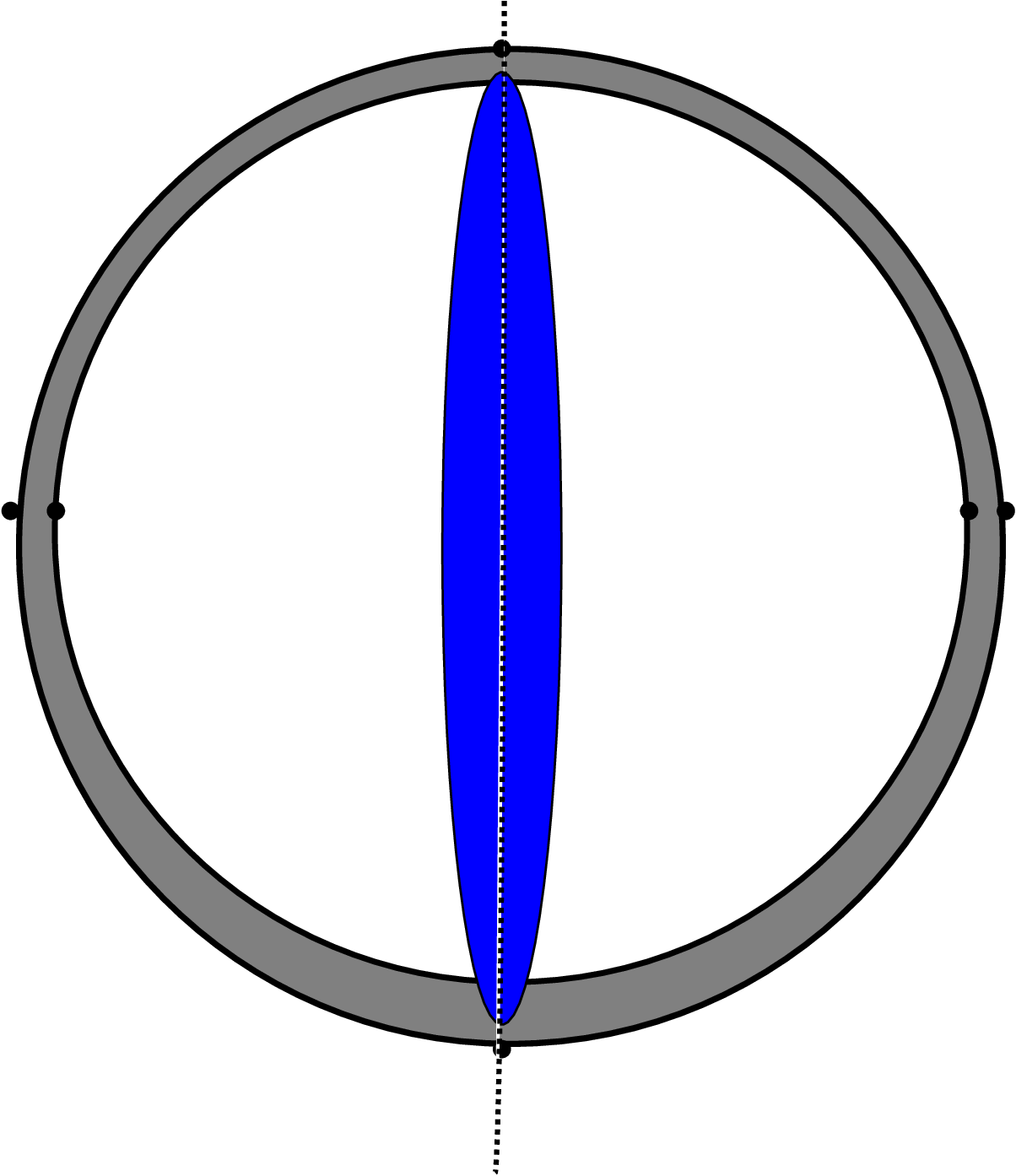}
	\caption{A region surrounded by two concentric circles and an ellipsoid $D_{j^{\prime}}$, colored in blue, and having the black colored boundary. This is for (the additional statement of) Theorem \ref{thm:1}.}
	\label{fig:3}
\end{figure}

We can also see that in Theorem \ref{thm:2}, considering only circles for $\mathcal{S}$ and only closed disks, whose boundaries are circles, for open connected sets $D_{j^{\prime}}$, is essential.
\end{Ex}

In our general arguments on Definition \ref{def:4}, as a natural condition stronger than the condition that the intersection $D_{j^{\prime}} \bigcap  \overline{D_{\mathcal{S}}}$ is connected, we can consider the condition that not only this intersection, but also the intersection $D_{j^{\prime}} \bigcap  (\overline{D_{\mathcal{S}}}-D_{\mathcal{S}})$ is connected. It is easy to check that the connectedness of $D_{j^{\prime}} \bigcap  (\overline{D_{\mathcal{S}}}-D_{\mathcal{S}})$ implies the connectedness of $D_{j^{\prime}} \bigcap  \overline{D_{\mathcal{S}}}$.

In the present case, the families $\mathcal{S}$ and ${\mathcal{S}}^{\prime}$ of Definition \ref{def:4} consist of circles and we have the following for this.
\begin{Thm}
	\label{thm:3}
\begin{enumerate}
\item \label{thm:3.1} Let the family $\mathcal{S}$ in Definition \ref{def:4} contain no curve which is a connected component of the zero set of a real polynomial of degree $1$, or a straight line. If the pointed set $(D_{j^{\prime}},x_{j^{\prime}})$ with $x_{j^{\prime}}=({x_{j^{\prime}}}_1,{x_{j^{\prime}}}_2) \in D_{j^{\prime}} \bigcap (\overline{D_{\mathcal{S}}}-D_{\mathcal{S}})$ is $(\mathcal{S},D_{\mathcal{S}},A_{D_{\mathcal{S}}})$-LS, and the set $D_{j^{\prime}} \bigcap  (\overline{D_{\mathcal{S}}}-D_{\mathcal{S}})$ is connected, then $D_{j^{\prime}} \bigcap  (\overline{D_{\mathcal{S}}}-D_{\mathcal{S}})$ must be a piecewise smooth curve containing at most $1$ point contained in $A_{D_{\mathcal{S}}}$. Especially, if the pointed set $(D_{j^{\prime}},x_{j^{\prime}})$ with $x_{j^{\prime}}=({x_{j^{\prime}}}_1,{x_{j^{\prime}}}_2) \in D_{j^{\prime}} \bigcap (\overline{D_{\mathcal{S}}}-D_{\mathcal{S}})$ is $(\mathcal{S},D_{\mathcal{S}},A_{D_{\mathcal{S}}})$-LS, $x_{j^{\prime}}$ is not a point of $A_{D_{\mathcal{S}}}$, and the set $D_{j^{\prime}} \bigcap  (\overline{D_{\mathcal{S}}}-D_{\mathcal{S}})$ is connected, then $D_{j^{\prime}} \bigcap  (\overline{D_{\mathcal{S}}}-D_{\mathcal{S}})$ must be a smooth curve  containing no point of $A_{D_{\mathcal{S}}}$. 
\item \label{thm:3.2} In addition, let the families $\mathcal{S}$ and ${\mathcal{S}}^{\prime}$ in Definition \ref{def:4} consist of circles and $A_{D_{\mathcal{S}}}:=F_{D_{\mathcal{S}},1} \bigcup F_{D_{\mathcal{S}},2}$.
Furthermore, if the pointed set $(D_{j^{\prime}},x_{j^{\prime}})$ with $x_{j^{\prime}}=({x_{j^{\prime}}}_1,{x_{j^{\prime}}}_2) \in D_{j^{\prime}} \bigcap (\overline{D_{\mathcal{S}}}-D_{\mathcal{S}})$ is $(\mathcal{S},D_{\mathcal{S}},A_{D_{\mathcal{S}}})$-S, and the set $D_{j^{\prime}} \bigcap  (\overline{D_{\mathcal{S}}}-D_{\mathcal{S}})$ is connected, in this situation, then we have the following.
\begin{enumerate}
\item \label{thm:3.2.1}
Let $x_{j^{\prime}}=({x_{j^{\prime}}}_1,{x_{j^{\prime}}}_2) \in {D_{j^{\prime}}}_t \bigcap (\overline{D_{\mathcal{S}}}-D_{\mathcal{S}})$ be a point contained in at most one circle from $\mathcal{S}$.
We have a family $\{({D_{j^{\prime}}}_t,x_{j^{\prime}})\}_{t \in [0,\infty)}$ of pointed sets  
parametrized by the real numbers $t \geq 0$ satisfying the following. 
\begin{enumerate}
\item
\label{thm:3.2.1.1} The sets ${D_{j^{\prime}}}_t$ are disks bounded by the circles $\overline{{D_{j^{\prime}}}_t}-{D_{j^{\prime}}}_t$.
\item \label{thm:3.2.1.2} The pointed set $({D_{j^{\prime}}}_t,x_{j^{\prime}})$ is $(\mathcal{S},D_{\mathcal{S}},A_{D_{\mathcal{S}}})$-S, and the set ${D_{j^{\prime}}}_t \bigcap (\overline{D_{\mathcal{S}}}-D_{\mathcal{S}})$ is connected. Let $({\mathcal{S}}_t,D_{{\mathcal{S}}_t})$ denote the resulting real algebraic domain obtained by putting ${D_{j^{\prime}}}_t$. 
\item \label{thm:3.2.1.3} The embeddings of the disks ${D_{j^{\prime}}}_t$ give a smooth isotopy with ${D_{j^{\prime}}}_0=D_{j^{\prime}}$.
\item \label{thm:3.2.1.4} The radii of ${D_{j^{\prime}}}_t$ converge to $0$ as $t$ increases.
\item \label{thm:3.2.1.5} For each $i=1,2$, the Poincar\'e-Reeb V-digraphs of $({\mathcal{S}}_t,D_{{\mathcal{S}}_t})$ for ${\pi}_{2,1,i}$ are mutually isomorphic for all $t$.
\end{enumerate} 
\item \label{thm:3.2.2}
We have a family $\{({D_{j^{\prime}}}_t,x_{j^{\prime}})\}_{t \in [0,\infty)}$ of pointed sets with $x_{j^{\prime}}=({x_{j^{\prime}}}_1,{x_{j^{\prime}}}_2) \in {D_{j^{\prime}}}_t \bigcap (\overline{D_{\mathcal{S}}}-D_{\mathcal{S}})$ parametrized by the real numbers $t \geq 0$ satisfying the following.
\begin{enumerate}
\item \label{thm:3.2.2.2} The sets ${D_{j^{\prime}}}_t$ are disks bounded by the circles $\overline{{D_{j^{\prime}}}_t}-{D_{j^{\prime}}}_t$ and centered at $x_{j^{\prime}}$.
\item The conditions {\rm (}\ref{thm:3.2.1.2}{\rm )},  {\rm (}\ref{thm:3.2.1.3}{\rm )}, {\rm (}\ref{thm:3.2.1.4}{\rm )}, and {\rm (}\ref{thm:3.2.1.5}{\rm )}.
\end{enumerate} 
\end{enumerate}
\end{enumerate}
	\end{Thm}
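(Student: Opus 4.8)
The plan is to handle the two parts of Theorem \ref{thm:3} separately. For part (\ref{thm:3.1}), first record that $\Gamma:=\overline{D_{\mathcal{S}}}-D_{\mathcal{S}}$ is a finite $1$-dimensional polyhedron whose non-manifold points are normal crossings, all lying in $F_{D_{\mathcal{S}},1}\bigcup F_{D_{\mathcal{S}},2}\subset A_{D_{\mathcal{S}}}$, and which is a smooth $1$-manifold away from $A_{D_{\mathcal{S}}}$. The set $C:=D_{j^{\prime}}\bigcap\Gamma$ is open in $\Gamma$, connected by hypothesis, and contains $x_{j^{\prime}}$. The decisive point is that \emph{any connected smooth subarc of $C$ whose interior misses $A_{D_{\mathcal{S}}}$ is strictly monotone in each coordinate}: such a subarc lies in a single curve $S_j$, which is not a line by hypothesis, so the absence from its interior of singular points of ${\pi}_{2,1,i}{\mid}_{S_j}$ --- which would lie in $F_{D_{\mathcal{S}},i}\subset A_{D_{\mathcal{S}}}$ precisely because $S_j$ is not a line --- forces the $i$-th coordinate to be strictly monotone along it, for $i=1,2$. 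Hence if $p\in C\bigcap A_{D_{\mathcal{S}}}$ with $p\neq x_{j^{\prime}}$, choosing a path in $C$ from $x_{j^{\prime}}$ to $p$ and letting $p^{\prime}$ be the first point of $A_{D_{\mathcal{S}}}$ met on it after $x_{j^{\prime}}$, the subpath from $x_{j^{\prime}}$ to $p^{\prime}$ has interior in $\Gamma\setminus A_{D_{\mathcal{S}}}$, hence is a smooth arc in a single $S_j$, so $p^{\prime}$ and $x_{j^{\prime}}$ differ in both coordinates, contradicting $(\mathcal{S},D_{\mathcal{S}},A_{D_{\mathcal{S}}})$-LS. Therefore $C\bigcap A_{D_{\mathcal{S}}}\subset\{x_{j^{\prime}}\}$: $C$ has at most one point of $A_{D_{\mathcal{S}}}$, its only possible non-smooth point is $x_{j^{\prime}}$, so it is a piecewise smooth curve, and when moreover $x_{j^{\prime}}\notin A_{D_{\mathcal{S}}}$ it avoids all crossings, lies in a single smooth curve, and is smooth.

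For part (\ref{thm:3.2}), note that $(\mathcal{S},D_{\mathcal{S}},A_{D_{\mathcal{S}}})$-S implies $(\mathcal{S},D_{\mathcal{S}},A_{D_{\mathcal{S}}})$-LS, so part (\ref{thm:3.1}) applies and $C$ is an arc through $x_{j^{\prime}}$ with at most one corner and with endpoints on the circle $\overline{D_{j^{\prime}}}-D_{j^{\prime}}$; since $\mathcal{S}$ consists of circles, $C$ is a union of at most two circular arcs meeting at $x_{j^{\prime}}$ (exactly one under the extra hypothesis of (\ref{thm:3.2.1}) that $x_{j^{\prime}}$ lies on at most one circle). I would then build the families explicitly. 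For (\ref{thm:3.2.1}) let ${D_{j^{\prime}}}_t$ be the image of the open disk $D_{j^{\prime}}$ under the homothety centred at $x_{j^{\prime}}$ of ratio ${\lambda}(t)$, where ${\lambda}:[0,\infty)\to(0,1]$ is smooth with ${\lambda}(0)=1$ and ${\lambda}(t)\to 0$; then ${D_{j^{\prime}}}_0=D_{j^{\prime}}$, each ${D_{j^{\prime}}}_t$ is a disk with $x_{j^{\prime}}$ in its interior, these form a smooth isotopy, their radii tend to $0$, and ${D_{j^{\prime}}}_t\subset D_{j^{\prime}}$ for $t>0$. For (\ref{thm:3.2.2}) let ${D_{j^{\prime}}}_t$ instead be the open disk centred at $x_{j^{\prime}}$ of radius ${r}(t)$ with ${r}$ smooth, decreasing and ${r}(t)\to 0$, and with ${r}(0)$ small (and ${D_{j^{\prime}}}_0=D_{j^{\prime}}$ when the given $D_{j^{\prime}}$ is itself centred at $x_{j^{\prime}}$). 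In both cases put ${\mathcal{S}}_t:=\mathcal{S}\sqcup\{\overline{{D_{j^{\prime}}}_t}-{D_{j^{\prime}}}_t\}$ and $D_{{\mathcal{S}}_t}:=D_{\mathcal{S}}\bigcap({\mathbb{R}}^2-\overline{{D_{j^{\prime}}}_t})$, and after a generic choice of ${\lambda}$, resp.\ ${r}$, keeping the new circle transverse to every circle of $\mathcal{S}$ and free of triple points, one checks that $({\mathcal{S}}_t,D_{{\mathcal{S}}_t},A_{D_{{\mathcal{S}}_t}})$ is a refined algebraic domain with poles.

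Next I would verify the stability statements. For connectedness of ${D_{j^{\prime}}}_t\bigcap\Gamma$: since ${D_{j^{\prime}}}_t\subset\overline{D_{j^{\prime}}}$, resp.\ is a small ball about $x_{j^{\prime}}$, one has ${D_{j^{\prime}}}_t\bigcap\Gamma={D_{j^{\prime}}}_t\bigcap C$, and intersecting the at most two circular arcs forming $C$ (each a proper subarc of a circle with $x_{j^{\prime}}$ as an endpoint) with ${D_{j^{\prime}}}_t$ (which meets each such circle in an arc through $x_{j^{\prime}}$) yields at most two subarcs, each containing $x_{j^{\prime}}$, hence a connected set. For the $(\mathcal{S},D_{\mathcal{S}},A_{D_{\mathcal{S}}})$-S property: in case (\ref{thm:3.2.1}) the inclusion $\overline{{D_{j^{\prime}}}_t}\bigcap\overline{D_{\mathcal{S}}}\subset\overline{D_{j^{\prime}}}\bigcap\overline{D_{\mathcal{S}}}$ makes each ${\pi}_{2,1,i}(\overline{{D_{j^{\prime}}}_t}\bigcap\overline{D_{\mathcal{S}}})$ a subinterval of ${\pi}_{2,1,i}(\overline{D_{j^{\prime}}}\bigcap\overline{D_{\mathcal{S}}})$, so the property is inherited from that of $D_{j^{\prime}}$; in case (\ref{thm:3.2.2}) it holds because ${\pi}_{2,1,i}(\overline{{D_{j^{\prime}}}_t}\bigcap\overline{D_{\mathcal{S}}})$ is an interval shrinking to $\{{x_{j^{\prime}}}_i\}$ while the finite set ${\pi}_{2,1,i}(A_{D_{\mathcal{S}}})-\{{x_{j^{\prime}}}_i\}$ is bounded away from ${x_{j^{\prime}}}_i$, once ${r}(0)$ is taken small enough. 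Smoothness of the isotopy and convergence of the radii are built into the construction.

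The main and hardest step is the invariance of the Poincar\'e--Reeb V-digraphs for ${\pi}_{2,1,i}$. Removing the ``bite'' $\overline{{D_{j^{\prime}}}_t}$ from $D_{\mathcal{S}}$ along the connected arc ${D_{j^{\prime}}}_t\bigcap\Gamma$ keeps $D_{{\mathcal{S}}_t}$ connected and introduces into the configuration only crossing and singular points lying on the circle $\overline{{D_{j^{\prime}}}_t}-{D_{j^{\prime}}}_t$; the $(\mathcal{S},D_{\mathcal{S}},A_{D_{\mathcal{S}}})$-S property guarantees, for each $i$, that the ${\pi}_{2,1,i}$-values at these new points stay inside the interval ${\pi}_{2,1,i}(\overline{D_{j^{\prime}}}\bigcap\overline{D_{\mathcal{S}}})$ and never equal a value of ${\pi}_{2,1,i}(A_{D_{\mathcal{S}}})-\{{x_{j^{\prime}}}_i\}$, so no vertex of the V-digraph coming from the old data ever swaps its order relation with a vertex coming from the bite, and no combinatorial event happens while the bite is nonempty, i.e.\ for all finite $t$. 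To conclude I would show that the modification of the V-digraph contributed by a nonempty bite attached along a connected arc through $x_{j^{\prime}}$ has combinatorial type constant along the isotopy; this is where the local analyses of adding a sufficiently small disk from \cite{kitazawa5, kitazawa6, kitazawa8}, reviewed in Remark \ref{rem:1}, are invoked, the relevant combinatorial data depending only on the position of $x_{j^{\prime}}$ relative to the circles of $\mathcal{S}$ and on the local sign patterns of the two coordinates near $x_{j^{\prime}}$, all of which are unchanged by shrinking. Hence in each family all the V-digraphs are mutually isomorphic. I expect the bookkeeping in this last step --- identifying exactly which edges and vertices the bite creates and checking that none are merged or annihilated before $t=\infty$ --- to be the principal obstacle, the remaining steps being elementary planar geometry of circles and disks together with direct inheritance arguments.
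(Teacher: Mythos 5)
Your treatment of part (\ref{thm:3.1}) is essentially the paper's argument (a smooth subarc of the boundary missing $A_{D_{\mathcal{S}}}$ has no singular point of either projection, hence is strictly monotone in both coordinates, so a second pole in $D_{j^{\prime}} \bigcap (\overline{D_{\mathcal{S}}}-D_{\mathcal{S}})$ would force a point differing from $x_{j^{\prime}}$ in both coordinates, contradicting the LS condition), and that part is fine.

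The gap is in part (\ref{thm:3.2}), at exactly the step you yourself flag as the hardest, namely (\ref{thm:3.2.1.5}). Your family is the homothety of $D_{j^{\prime}}$ centred at $x_{j^{\prime}}$, and your justification is that the $(\mathcal{S},D_{\mathcal{S}},A_{D_{\mathcal{S}}})$-S condition prevents any combinatorial event, the bite's contribution depending only on the position of $x_{j^{\prime}}$ and local sign data ``unchanged by shrinking''. That is not true: the S condition only controls the interaction of the bite with the \emph{old} poles $A_{D_{\mathcal{S}}}$; it says nothing about the position of the \emph{new} circle's own coordinate-extremal points (its leftmost/rightmost/top/bottom points) relative to the curve $S_{j_0} \in \mathcal{S}$ carrying $x_{j^{\prime}}$, and under the homothety these can cross $S_{j_0}$ at an interior parameter because $S_{j_0}$ is curved. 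Concretely, take $\mathcal{S}$ two concentric circles of radii $1$ and $3$, $D_{\mathcal{S}}$ the annulus, $x_{j^{\prime}}=(\cos 40^{\circ},\sin 40^{\circ})$, and $D_{j^{\prime}}$ the disk of radius $0.1$ centred at $(1-0.0771)x_{j^{\prime}}$; this satisfies all hypotheses of (\ref{thm:3.2.1}). The rightmost point of the scaled circle is $x_{j^{\prime}}+\lambda v$ with $v=(c-x_{j^{\prime}})+(0.1,0)$, and $|x_{j^{\prime}}+\lambda v|^2-1=\lambda(2b+a^2\lambda)$ with $b=x_{j^{\prime}}\cdot v<0$ and $a^2>2|b|$, so it lies outside the unit circle for $\lambda$ near $1$ and inside for small $\lambda$. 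While it is outside, the boundary arc of the bite contains a vertical-tangency point, producing an extra vertex and a double edge (a thin sliver between the unit circle and the added circle) in the Poincar\'e--Reeb digraph for ${\pi}_{2,1,1}$; after it passes inside, that vertex and the double edge disappear. So within your family the digraphs are not mutually isomorphic, and no ``generic choice of $\lambda$'' can help, since the ratio sweeps the whole interval $(0,1]$. The paper's construction is different precisely here: it does not shrink about $x_{j^{\prime}}$, but keeps the two points of $(\overline{{D_{j^{\prime}}}_t}-{D_{j^{\prime}}}_t)\bigcap(\overline{D_{\mathcal{S}}}-D_{\mathcal{S}})$ on a family of mutually parallel lines and keeps the circle-plus-chord configuration similar (orientation-preservingly) for all $t$, which is what is meant to freeze the relative combinatorics of the added circle with respect to the projections; any correct proof has to control this relative position, not just the projections of $A_{D_{\mathcal{S}}}$. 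The same defect affects your version of (\ref{thm:3.2.2}) when the initial radius is not small, since a coordinate ray from $x_{j^{\prime}}$ can meet $S_{j_0}$ a second time.
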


\begin{proof}
We first prove (\ref{thm:3.1}).

Suppose that the pointed set $(D_{j^{\prime}},x_{j^{\prime}})$ with $x_{j^{\prime}}=({x_{j^{\prime}}}_1,{x_{j^{\prime}}}_2) \in D_{j^{\prime}} \bigcap (\overline{D_{\mathcal{S}}}-D_{\mathcal{S}})$ is $(\mathcal{S},D_{\mathcal{S}},A_{D_{\mathcal{S}}})$-LS, that the set $D_{j^{\prime}} \bigcap  (\overline{D_{\mathcal{S}}}-D_{\mathcal{S}})$ is connected, and that $D_{j^{\prime}} \bigcap  (\overline{D_{\mathcal{S}}}-D_{\mathcal{S}})$ is a piecewise smooth curve containing two distinct points $p_1$ and $p_2$ contained in $F_{D_{\mathcal{S}},1} \bigcup F_{D_{\mathcal{S}},2} \subset A_{D_{\mathcal{S}}}$.
Here, $p_1$ and $p_2$ are also in the interior of $D_{j^{\prime}}$. In addition, the values of the 1st components of $p_1$ and $p_2$ or the values of the 2nd components of them are same, by the assumption that the pointed set $(D_{j^{\prime}},x_{j^{\prime}})$ with $x_{j^{\prime}}=({x_{j^{\prime}}}_1,{x_{j^{\prime}}}_2) \in D_{j^{\prime}} \bigcap (\overline{D_{\mathcal{S}}}-D_{\mathcal{S}})$ is $(\mathcal{S},D_{\mathcal{S}},A_{D_{\mathcal{S}}})$-LS.  
Furthermore, $D_{j^{\prime}} \bigcap  (\overline{D_{\mathcal{S}}}-D_{\mathcal{S}})$ must contain a smooth curve connecting two points $p_1$ and $p_2$. By the assumption that $\mathcal{S}$ has no straight line, we have another point $p_3 \in D_{j^{\prime}} \bigcap  (\overline{D_{\mathcal{S}}}-D_{\mathcal{S}})$ being also a point of $A_{D_{\mathcal{S}}}$ and the following hold: for $j=1,2$, the values of the 1st components of $p_j$ and $p_3$ and the values of the 2nd components of them are different. This contradicts the assumption that the pointed set $(D_{j^{\prime}},x_{j^{\prime}})$ is $(\mathcal{S},D_{\mathcal{S}},A_{D_{\mathcal{S}}})$-LS.

We have shown that $D_{j^{\prime}} \bigcap  (\overline{D_{\mathcal{S}}}-D_{\mathcal{S}})$ must be a piecewise smooth curve containing at most $1$ point contained in exactly two circles from $\mathcal{S}$ under the assumption that the pointed set $(D_{j^{\prime}},x_{j^{\prime}})$ with $x_{j^{\prime}}=({x_{j^{\prime}}}_1,{x_{j^{\prime}}}_2) \in D_{j^{\prime}} \bigcap (\overline{D_{\mathcal{S}}}-D_{\mathcal{S}})$ is $(\mathcal{S},D_{\mathcal{S}},A_{D_{\mathcal{S}}})$-S and that the set $D_{j^{\prime}} \bigcap  (\overline{D_{\mathcal{S}}}-D_{\mathcal{S}})$ is connected.

Especially, in addition, if $x_{j^{\prime}}$ is not a point of $A_{D_{\mathcal{S}}}$, then $D_{j^{\prime}} \bigcap  (\overline{D_{\mathcal{S}}}-D_{\mathcal{S}})$ must be a smooth curve containing no point of $A_{D_{\mathcal{S}}}$ and we can show this by a similar argument. 


We prove (\ref{thm:3.2.1}). 

In short, by (\ref{thm:3.1}), for the families $\mathcal{S}$ and ${\mathcal{S}}^{\prime}$ in Definition \ref{def:4} consisting of circles with $A_{D_{\mathcal{S}}}:=F_{D_{\mathcal{S}},1} \bigcup F_{D_{\mathcal{S}},2}$, $D_{j^{\prime}}$ must not contain any point of $A_{D_{\mathcal{S}}}$ except $x_{j^{\prime}} \in D_{j^{\prime}}$.

We consider the uniquely obtained straight segment connecting the two points of $(\overline{D_{j^{\prime}}}-D_{j^{\prime}}) \bigcap (\overline{D_{\mathcal{S}}}-D_{\mathcal{S}})$. We move the straight line containing the segment to the unique straight line containing $x_{j^{\prime}}$ and parallel to the original straight line continuously and smoothly. We also move the lines in such a way that these straight lines are mutually parallel. 
Thus we have a continuous and smooth family $\{L_{j^{\prime},t}\}$ of such straight lines parametrized by non-negative numbers $t \geq 0$: of course $L_{j^{\prime},0}$ is the initial straight line here.
We have our desired family $\{{D_{j^{\prime}}}_t\}$ of disks satisfying the following. We also present several important arguments.
\begin{itemize}
 \item This is for the mutually parallel straight lines $L_{j^{\prime},t}$. These lines converge to the straight line containing $x_{j^{\prime}}$ and parallel to the original straight line $L_{j^{\prime},0}$ as the numbers $t$ increase to the infinity $\infty$.
\item 
The set $(\overline{{D_{j^{\prime}}}_t}-{D_{j^{\prime}}}_t) \bigcap (\overline{D_{\mathcal{S}}}-D_{\mathcal{S}})$ is a two-point set and contained in the straight line $L_{j^{\prime},t}$. Let the $1$-dimensional set represented as the union of the circle $\overline{{D_{j^{\prime}}}_t}-{D_{j^{\prime}}}_t$ and the unique straight segment connecting the two points of the two-point set $(\overline{{D_{j^{\prime}}}_t}-{D_{j^{\prime}}}_t) \bigcap (\overline{D_{\mathcal{S}}}-D_{\mathcal{S}}) \subset L_{j^{\prime},t}$, be denoted by ${S_{j^{\prime}}}_{t,\mathcal{S}}$. 

Here, we need the notion of similarity of subsets in the plane ${\mathbb{R}}^2$. For distinct numbers $t_1, t_2 \geq 0$, ${S_{j^{\prime}}}_{t_1,\mathcal{S}}$ and ${S_{j^{\prime}}}_{t_2,\mathcal{S}}$ are similar. Furthermore, we do not need to use any orientation reversing transformation to transform ${S_{j^{\prime}}}_{t_1,\mathcal{S}}$ to ${S_{j^{\prime}}}_{t_2,\mathcal{S}}$ by a composition of transformations preserving the similarity, in ${\mathbb{R}}^2$.

Here, the assumption that $x_{j^{\prime}}$ is a point contained in at most one circle from $\mathcal{S}$ implies that $x_{j^{\prime}} \in {D_{j^{\prime}}}_t \bigcap (\overline{D_{\mathcal{S}}}-D_{\mathcal{S}})$.

\item The continuous and smooth deformation increasing $t$ induces isomorphisms between the Poincar\'e-Reeb V-digraphs of $({\mathcal{S}}_t,D_{{\mathcal{S}}_t})$ for ${\pi}_{2,1,i}$. This is easily shown by investigating tangent vectors of the circles at points there.

We explain this more precisely. By slightly changing the values of $t$, we can induce the isomorphisms. The compactness of the closed interval $[0,t]:=\{0 \leq t^{\prime} \leq t\}$ for an arbitrary positive number $t>0$ with the assumption that the pointed set $({D_{j^{\prime}}}_t,x_{j^{\prime}})$ is $(\mathcal{S},D_{\mathcal{S}},A_{D_{\mathcal{S}}})$-S completes the proof of this.
\end{itemize}

For (\ref{thm:3.2.2}), we have a continuous and smooth family of disks as (\ref{thm:3.2.1}) easily and canonically. We can check the existence of the induced isomorphisms between the Poincar\'e-Reeb V-digraphs of $({\mathcal{S}}_t,D_{{\mathcal{S}}_t})$ for ${\pi}_{2,1,i}$.

This completes the proof.
\end{proof}
The following shows a kind of our additional remark on Theorem \ref{thm:3}.
\begin{Rem}
\label{rem:1}
Theorem \ref{thm:3} (\ref{thm:3.2.1}) respects a rule of putting disks one after another, as in \cite{kitazawa8}, where we consider the situation of Definition \ref{def:4}. Theorem \ref{thm:3} (\ref{thm:3.2.2}) respects a rule of putting disks as in \cite{kitazawa5}. \cite{kitazawa5} shows an explicit rule of adding disks one after another, first. \cite{kitazawa8}, especially, \cite[Theorem 2 and Theorem 3]{kitazawa8}, explains that our rules in \cite{kitazawa5, kitazawa6} are generalized to a certain rule. Note that \cite[Theorem 2 and Theorem 3]{kitazawa8} only discusses the case the point $x_{j^{\prime}} \in D_{j^{\prime}}$ is not a point of $A_{D_{\mathcal{S}}}:=F_{D_{\mathcal{S}},1} \bigcup F_{D_{\mathcal{S}},2}$

Note also that our study is related to explicit construction of a natural real algebraic map onto the region $D_{\mathcal{S}}$ surrounded by the real algebraic curves of $\mathcal{S}$, started in \cite{kitazawa1} and followed by \cite{kitazawa2} by the author. 
See also Theorem \ref{thm:6}, presented in the end, and related exposition there.
\cite{kitazawa5, kitazawa6} respect \cite{kitazawa2}.

\end{Rem}
\section{Additional related observations.}
We consider cases from \cite[Theorem 1]{kitazawa7}. We abuse our notation before or naturally defined one.
\begin{Thm}[\cite{kitazawa7}]
	\label{thm:4}
	Let $c_G:G \rightarrow \mathbb{R}$ be a piecewise smooth function with the following.
	\begin{itemize}
		\item The restriction of $c_G$ to each edge is injective.
		\item The degree of each vertex is not $2$.
		\item If the function $c_G$ has a local extremum at a vertex $v$, then $v$ is of degree $1$.
		\item The function $c_G$ is the composition of a piecewise smooth embedding $\tilde{c_G}:G \rightarrow {\mathbb{R}}^2$ with ${\pi}_{2,1,1}$.
	\end{itemize} 
	Then we have a refined real algebraic domain $({\mathcal{S}}_{c_G},D_{{\mathcal{S}}_{c_G}},F_{D_{{\mathcal{S}}_{c_G}},1} \bigcup F_{D_{{\mathcal{S}}_{c_G}},2})$ with poles enjoying the following.
	\begin{enumerate}
		\item We have a piecewise smooth homeomorphism $\phi:G \rightarrow W_{D_{{\mathcal{S}}_{c_G}},1}$ mapping the vertex set of $G$ into the vertex set of the Poincar\'e-Reeb V-digraph $(W_{D_{{\mathcal{S}}_{c_G}},1},V_{D_{{\mathcal{S}}_{c_G}},1})$.
		\item The vertex $v$ of the graph $G$ is mapped to $\phi(v)$ with the constraint $c_G(v)=(V_{D_{{\mathcal{S}}_{c_G}},1} \circ \phi)(v)$
	\end{enumerate}
	\end{Thm}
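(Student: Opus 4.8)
The plan is to realize $c_G:G\to{\mathbb R}$ first by a \emph{smooth} planar region whose Poincar\'e-Reeb datum for ${\pi}_{2,1,1}$ is visibly $(G,c_G)$, and then to replace the smooth boundary by non-singular real algebraic curves using approximation theory, arguing that the combinatorial data survives the replacement. First I would use the given piecewise smooth embedding $\tilde{c_G}:G\to{\mathbb R}^2$, whose first coordinate equals $c_G$ by hypothesis, and take a sufficiently thin compact neighborhood $N$ of $\tilde{c_G}(G)$ in ${\mathbb R}^2$ deformation retracting onto $\tilde{c_G}(G)$. The four hypotheses are exactly what make the restriction of ${\pi}_{2,1,1}$ to $N$ collapse, under the equivalence identifying connected components of level sets, onto $(G,c_G)$: injectivity of $c_G$ on each edge forbids ``horizontal'' arcs; the ``degree $\neq 2$'' condition forces every vertex of $G$ to record a genuine change of the level-set filtration of ${\pi}_{2,1,1}|_N$ (a birth or death of an interval at a degree-$1$ vertex, a splitting or merging of intervals at a vertex of degree $\geq 3$); and the requirement that local extrema occur only at degree-$1$ vertices makes precisely those vertices the end-caps of $N$. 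After a preliminary perturbation of $\tilde{c_G}$ supported away from the vertex set, so that the numbers $c_G(v)$ are not changed, one may assume the vertices have pairwise distinct $c_G$-values and that near each vertex ${\pi}_{2,1,1}$ has a standard local model.

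Next I would round the corners of the boundary $N-N^{\circ}$, a disjoint union of piecewise smooth circles, and arrange genericity: normal crossings at the finitely many self- and mutual intersections (needed for condition (2) of Definition~\ref{def:1}), nondegenerate vertical tangencies, and pairwise distinct critical values of ${\pi}_{2,1,1}$ on the boundary. Then real algebraic approximation of each smooth boundary circle in the $C^{\infty}$ topology on a neighborhood by a non-singular real algebraic curve (see the approximation theory in \cite{bochnakcosteroy}) produces a family ${\mathcal{S}}_{c_G}=\{S_j\}$ of non-singular real algebraic curves bounding a bounded region $D_{{\mathcal{S}}_{c_G}}$. The Poincar\'e-Reeb V-digraph for ${\pi}_{2,1,1}$ is determined by the finite data of $F_{D_{{\mathcal{S}}_{c_G}},1}\cup F_{D_{{\mathcal{S}}_{c_G}},2}$ --- the double points and the vertical-tangency points on the non-linear curves, together with their ${\pi}_{2,1,1}$-values and their incidences --- and every item on this list is stable under sufficiently small $C^{2}$ perturbations once genericity is in force, so a close enough algebraic model has the same Poincar\'e-Reeb V-digraph as the smooth one. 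Putting $A_{D_{{\mathcal{S}}_{c_G}}}:=F_{D_{{\mathcal{S}}_{c_G}},1}\cup F_{D_{{\mathcal{S}}_{c_G}},2}$, which is finite as noted after Definition~\ref{def:1}, gives a refined algebraic domain with poles in the sense of Definition~\ref{def:3}.

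It remains to produce the homeomorphism $\phi:G\to W_{D_{{\mathcal{S}}_{c_G}},1}$ with $c_G(v)=(V_{D_{{\mathcal{S}}_{c_G}},1}\circ\phi)(v)$: the graph isomorphism comes from the preceding step, and the approximation gives $(V_{D_{{\mathcal{S}}_{c_G}},1}\circ\phi)(v)$ close to $c_G(v)$, so one closes the gap by a final adjustment. The construction carries enough independent real parameters --- each end-cap can be translated horizontally and each ``saddle'' level can be slid, each by an algebraic family near the identity --- that the map from parameters to the tuple of critical ${\pi}_{2,1,1}$-values at the vertex levels is a submersion near the chosen model, hence attains the prescribed tuple $(c_G(v))_v$ exactly while keeping non-singularity and the genericity conditions. \textbf{The main obstacle} is running the approximation of the second step simultaneously with (i) preserving non-singularity and normal crossings of all the curves, (ii) leaving the combinatorial Poincar\'e-Reeb data unchanged, and (iii) retaining enough continuous control to then solve exactly for the vertex values; by contrast the thickening in the first step and the identification of the quotient graph with $G$ are routine bookkeeping with the level-set filtration of ${\pi}_{2,1,1}$.
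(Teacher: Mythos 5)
Your first two steps (thickening $\tilde{c_G}(G)$ and replacing the smooth boundary by non-singular real algebraic curves via approximation controlling low-order derivatives) run parallel to the paper's argument, which also starts from a regular neighborhood of $\tilde{c_G}(G)$ and a polynomial approximation respecting derivatives up to order $2$. The genuine gap is in your last step, exactly where you place ``the main obstacle''. First, your genericity requirement that the critical values of ${\pi}_{2,1,1}$ on the boundary be pairwise distinct is incompatible with the conclusion at any vertex $v$ of degree at least $4$: a generic boundary (smooth or algebraic) produces, near the level $c_G(v)$, several distinct vertices of the Poincar\'e-Reeb graph of small degree at distinct values, and no homeomorphism $\phi:G \rightarrow W_{D_{{\mathcal{S}}_{c_G}},1}$ can send a vertex of degree $\geq 4$ onto such a configuration; recovering a single vertex of the correct degree forces several vertical tangencies onto one vertical line, a non-generic coincidence that mere $C^2$-closeness does not give. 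Second, the claim that the residual discrepancy between the approximate critical values and the exact numbers $c_G(v)$ can be removed because ``each end-cap can be translated and each saddle level slid'' by algebraic families forming a submersion is unsubstantiated: the boundary of the thickened region consists of a few \emph{connected} algebraic curves, and one cannot move one end-cap or one saddle of such a curve independently while keeping it algebraic and non-singular; re-approximating after such a move loses exactness again. As written, your argument yields the statement only up to an $\epsilon$-perturbation of the values $c_G(v)$, and only for graphs all of whose vertices have degree $1$ or $3$.

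The paper resolves precisely this point by exploiting the refined (multi-curve) setting instead of a perturbation argument. It approximates only once, accepting that the Morse points of the single approximating curve $S_{0,c_G}$ lie at shifted positions $({\pi}_{2,1,1}(v)\pm{\epsilon}_{v,\pm,j},a_{v,\pm,j})$, and then adjoins auxiliary ellipses --- conics, over which one has complete and exact control --- centered at these Morse points, of the standard form with $a_1{{\epsilon}_{v,\pm,j}}^2=r$ and $a_2$ large, so that each ellipse is vertically thin and its extreme point in the $x$-direction lies \emph{exactly} on the line $x={\pi}_{2,1,1}(v)$ (similarly for degree-$1$ vertices, ellipses through a boundary point $p_{j^{\prime}}$ with ${\pi}_{2,1,1}(p_{j^{\prime}})={\pi}_{2,1,1}(v_0)$). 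Removing these ellipsoidal disks makes every local splitting or merging of level components occur at exactly the level $c_G(v)$ and regroups the split Morse points attached to a degree-$k$ vertex into a single Poincar\'e-Reeb vertex of the correct degree, so the constraint $c_G(v)=(V_{D_{{\mathcal{S}}_{c_G}},1}\circ \phi)(v)$ holds on the nose with no submersion or parameter count. To repair your proposal you would need either such exactly placed auxiliary curves or an actual construction of the claimed algebraic deformation families together with a mechanism producing vertices of degree $\geq 4$.
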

	Note that the finite set $F_{D_{{\mathcal{S}}_{c_G}},1} \bigcup F_{D_{{\mathcal{S}}_{c_G}},2}$ is not essential or respected in \cite{kitazawa7}.
\begin{proof}[Reviewing the original proof]
We review our proof shortly. Check also our original proof. In the original study \cite{kitazawa7}, we can also check figures for our sketch of the proof there.

The most important is "approximation by real polynomials respecting the derivatives of degree $d \leq 2$". See \cite{bochnakcosteroy, kollar, lellis} for this and related survey on real algebraic geometry and \cite{bodinpopescupampusorea, elredge} for explicit usage. 

We can have the zero set $S_{0,c_G}$ of a refined real algebraic region\\ $(\{S_{0,c_G}\},D_{\{S_{0,c_G}\}},F_{D_{\{S_{0,c_G}\}},1} \bigcup F_{D_{\{S_{0,c_G}\}},2})$
with poles. In addition, there exist a sufficiently small positive number ${\epsilon}_0$, another sufficiently small positive one ${\epsilon}>0$, and another sufficiently small positive one $0<{\epsilon}^{\prime}<\epsilon$. For these objects and numbers, we can do so that the following properties are enjoyed, in addition.
\begin{itemize}
	\item The closure of $D_{\{S_{0,c_G}\}}$ is regarded as a regular neighborhood of the image of $\tilde{c_G}:G \rightarrow {\mathbb{R}}^2$ with ${\pi}_{2,1,1}$ where $\tilde{c_G}$ can be changed suitably and is changed from the original one in some suitable way. \cite{hirsch} discusses regular neighborhoods in the (piecewise) smooth category. More precisely, the distance of any point in the boundary $S_{0,c_G}$ of the closure of $D_{\{S_{0,c_G}\}}$ and the graph $\tilde{c_G}(G)$ is smaller than the sufficiently small positive number ${\epsilon}_0>0$.
	\item The set of all singular points of the restriction of ${\pi}_{2,1,1}$ to $S_{0,c_G}$ is $F_{D_{\{S_{0,c_G}\}},1}$. The function is also a so-called {\it Morse} function at each of these points: in other words the value of the 2nd derivative there is not $0$ (for any local coordinate). We have this from our definition together with approximation from elementary singularity theory and real algebraic geometry.
	\item For each point of $F_{D_{\{S_{0,c_G}\}},1}$, a vertex of $v$ of $\tilde{c_G}(G)$ is canonically corresponded. 
\item For each vertex $v \in \tilde{c_G}(G)$ of the graph which is not of degree $1$, let $n_{v,-}$ ($n_{v,+}$) denote the number of oriented edges of the V-digraph $(\tilde{c_G}(G),{\pi}_{2,1,1} {\mid}_{V_{\tilde{c_G}(G)}})$ entering (resp. departing from) $v$: let $V_{\tilde{c_G}(G)}$ denote the vertex set of the graph. For exactly $n_{v,-}+n_{v,+}$ of singular points of the restriction of ${\pi}_{2,1,1}$ to $S_{0,c_G}$, $v$ is corresponded. The $j$-th point of the $n_{v,-}$ points is of the form $({\pi}_{2,1,1}(v)-{\epsilon}_{v,-,j},a_{v,-,j})$ and the $j$-th point of the $n_{v,+}$ points is of the form $({\pi}_{2,1,1}(v)+{\epsilon}_{v,+,j},a_{v,+,j})$ under the following rule.
	\begin{itemize}
		\item The relations $0<{\epsilon}_{v,-,j},{\epsilon}_{v,+,j} <{\epsilon}^{\prime}<\epsilon$ are satisfied where the numbers ${\epsilon}_{v,-,j_1}$ and ${\epsilon}_{v,-,j_2}$ (${\epsilon}_{v,+,j_1}$ and ${\epsilon}_{v,+,j_2}$) are mutually disjoint for distinct numbers $j_1$ and $j_2$ and sufficiently small.
		\item For each $v$ of the graph, a real number $a_v$ is chosen in such a way that for distinct vertices of the graph the values are distinct. We can also do in such a way that the relation $a_{v,-,j_1}<a_{v,-,j_2}$ holds for an arbitrary pair $(j_1,j_2)$ with $j_1<j_2$, that the relation $a_{v,+,j_1}<a_{v,+,j_2}$ holds for an arbitrary pair $(j_1,j_2)$ with $j_1<j_2$, and that the relations $a_{v,-,j_{-}}<a_{v,+,j_{+}}$ and $a_{v}-{\epsilon}^{\prime}<a_{v,-,j_{-}}, a_{v,+,j_{+}}<a_{v}+{\epsilon}^{\prime}$ hold for an arbitrary pair $(j_{-},j_{+})$.
		 \end{itemize}
\item For each vertex $v_0 \in \tilde{c_G}(G)$ of the graph which is of degree $1$, let $n_{0,v_0,-}$ ($n_{0,v_0,+}$) denote the number of oriented edges of the V-digraph entering (resp. departing from) $v_0$. Either $n_{0,v_0,-}$ or $n_{0,v_0,+}$ is $0$ of course.
For exactly $n_{0,v_0,-}+n_{0,v_0,+}$ of singular points of the restriction of ${\pi}_{2,1,1}$ to $S_{0,c_G}$, $v_0$ is corresponded. The $j$-th point of the $n_{0,v_0,-}$ points is of the form $({\pi}_{2,1,1}(v)+{\epsilon}_{v_0,-,j},a_{v_0,-,j})$ and the $j$-th point of the $n_{0,v_0,+}$ points is of the form $({\pi}_{2,1,1}(v)-{\epsilon}_{v_0,+,j},a_{v_0,+,j})$ under the following rule.
	\begin{itemize}
		\item The relations $0<{\epsilon}_{v_0,-,j},{\epsilon}_{v_0,+,j} <\epsilon$ are satisfied where the numbers ${\epsilon}_{v_0,-,j_1}$ and ${\epsilon}_{v_0,-,j_2}$ (${\epsilon}_{v_0,+,j_1}$ and ${\epsilon}_{v_0,+,j_2}$) are mutually disjoint for distinct numbers $j_1$ and $j_2$ and sufficiently small.
		\item For each $v_0$ of the graph, a real number $a_{v_0}$ is chosen in such a way that for distinct vertices of the graph the values are distinct and not equal to any $a_v$ for any vertex of degree at least $3$ of the graph. Furthermore, we can do in such a way that the relation $a_{v_0,-,j_1}<a_{v_0,-,j_2}$ holds for an arbitrary pair $(j_1,j_2)$ with $j_1<j_2$, that the relation $a_{v_0,+,j_1}<a_{v_0,+,j_2}$ holds for an arbitrary pair $(j_1,j_2)$ with $j_1<j_2$, and that the relations $a_{v_0,-,j_{-}}<a_{v_0,+,j_{+}}$ and $a_{v_0}-{\epsilon}^{\prime}<a_{v_0,-,j_{-}}, a_{v_0,+,j_{+}}<a_{v_0}+{\epsilon}^{\prime}$ hold for an arbitrary pair $(j_{-},j_{+})$.
		 \end{itemize}
	\end{itemize}  

We can put an ellipsoid $D_{j^{\prime}}$ of the standard form centered at $({\pi}_{2,1,1}(v) \pm {\epsilon}_{v,\pm,j},a_{v,\pm,j}) \in F_{D_{\mathcal{S}},1}$ and satisfying "$a_1{{\epsilon}_{v,\pm,j}}^2=r$ and $a_2$ being sufficiently large" in the definition. Remember that the relations $0<{\epsilon}_{v,-,j},{\epsilon}_{v,+,j} <{\epsilon}^{\prime}<\epsilon$ are satisfied and that the numbers ${\epsilon}_{v,-,j_1}$ and ${\epsilon}_{v,-,j_2}$ (${\epsilon}_{v,+,j_1}$ and ${\epsilon}_{v,+,j_2}$) are mutually disjoint for distinct numbers $j_1$ and $j_2$ and sufficiently small.
Remember also that the relation $a_{v_0,-,j_1}<a_{v_0,-,j_2}$ holds for an arbitrary pair $(j_1,j_2)$ with $j_1<j_2$, that the relation $a_{v_0,+,j_1}<a_{v_0,+,j_2}$ holds for an arbitrary pair $(j_1,j_2)$ with $j_1<j_2$, and that the relations $a_{v_0,-,j_{-}}<a_{v_0,+,j_{+}}$ and $a_{v_0}-{\epsilon}^{\prime}<a_{v_0,-,j_{-}}, a_{v_0,+,j_{+}}<a_{v_0}+{\epsilon}^{\prime}$ hold for an arbitrary pair $(j_{-},j_{+})$. We can see that each ellipsoid can be chosen one after another and disjointly, for each critical point of the function ${\pi}_{2,1,1} {\mid}_{S_{0,c_G}}$ corresponding to each vertex $v$ of degree at least $3$.

 We can put an ellipsoid of the standard form centered at some point and containing the point $({\pi}_{2,1,1}(v_0)-{\epsilon}_{v_0,+,j},a_{v_0,+,j}) \in F_{D_{\{S_{0,c_G}\}},1}$ ($({\pi}_{2,1,1}(v_0)+{\epsilon}_{v_0,-,j},a_{v_0,-,j}) \in F_{D_{\{S_{0,c_G}\}},1}$) in the interior in such a way that the boundary $\overline{D_{j^{\prime}}}-D_{j^{\prime}}$ of its closure contains a point $p_{j^{\prime}} \in S_{0,c_G}$ with ${\pi}_{2,1,1}(p_{j^{\prime}})={\pi}_{2,1,1}(v_0)$. We also do in such a way that the intersection $(\overline{D_{j^{\prime}}}-D_{j^{\prime}}) \bigcap \overline{D_{\{S_{0,c_G}\}}}$ is mapped by ${\pi}_{2,1,1}$ into $\mathbb{R}$ as an injective function with the minimum (resp. maximum) ${\pi}_{2,1,1}(v_0)$ .
Remember that the relations $0<{\epsilon}_{v_0,-,j},{\epsilon}_{v_0,+,j} <{\epsilon}^{\prime}<\epsilon$ are satisfied and that the numbers ${\epsilon}_{v_0,-,j_1}$ and ${\epsilon}_{v_0,-,j_2}$ (${\epsilon}_{v_0,+,j_1}$ and ${\epsilon}_{v_0,+,j_2}$) are mutually disjoint for distinct numbers $j_1$ and $j_2$ and sufficiently small. Remember also that the relation $a_{v_0,-,j_1}<a_{v_0,-,j_2}$ holds for an arbitrary pair $(j_1,j_2)$ with $j_1<j_2$, that the relation $a_{v_0,+,j_1}<a_{v_0,+,j_2}$ holds for an arbitrary pair $(j_1,j_2)$ with $j_1<j_2$, and that the relations $a_{v_0,-,j_{-}}<a_{v_0,+,j_{+}}$ and $a_{v_0}-{\epsilon}^{\prime}<a_{v_0,-,j_{-}}, a_{v_0,+,j_{+}}<a_{v_0}+{\epsilon}^{\prime}$ hold for an arbitrary pair $(j_{-},j_{+})$. We can see that each ellipsoid can be chosen one after another, disjointly, and disjoint from the previous ellipsoids, for each critical point of the function ${\pi}_{2,1,1} {\mid}_{S_{0,c_G}}$ corresponding to each vertex $v_0$ of degree $1$.

We put ellipsoids $D_{j^{\prime}}$ of the standard form one after another to have our desired result.
\end{proof} 
\begin{Prop}
\label{prop:1}
	In the present inductive procedure, for a step putting an ellipsoid $D_{j^{\prime}}$ centered at the point $x_{j^{\prime}}:=({\pi}_{2,1,1}(v) \pm {\epsilon}_{v,\pm,j},a_{v,\pm,j})$
or 
$x_{j^{\prime}}:=({\pi}_{2,1,1}(v_0) \pm {\epsilon}_{v_0,\pm,j},a_{v_0,\pm,j})$, we abuse the notation from Definition \ref{def:4}.

First, $D_{j^{\prime}}$ is $(\mathcal{S},D_{\mathcal{S}})$-connected.

In addition, we assume at least one of the following in a step in the procedure.
\begin{itemize}
\item At least one ellipsoid centered at a point of the form $({\pi}_{2,1,1}(v) \pm {\epsilon}_{v,\pm,j_0},a_{v,\pm,j_0})$ has been put in the case $v$ is of degree at least $3$.
\item At least one ellipsoid centered at a point of the form $({\pi}_{2,1,1}(v_0) \pm {\epsilon}_{v_0,\pm,j_0},a_{v_0,\pm,j_0})$ has been put in the case $v_0$ is of degree $1$.
\end{itemize}
 
	Under this additional assumption, the pointed set $(D_j,x_{j^{\prime}})$ is $(\mathcal{S},D_{\mathcal{S}},A_{D_{\mathcal{S}}})$-PLS and it is not $(\mathcal{S},D_{\mathcal{S}},A_{D_{\mathcal{S}}})$-PS.
\end{Prop}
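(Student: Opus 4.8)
Throughout write $S:=\overline{D_{\mathcal{S}}}-D_{\mathcal{S}}$ for the boundary at the step in question; by the reviewed proof of Theorem \ref{thm:4} the point $x_{j^{\prime}}$ is a singular point of ${\pi}_{2,1,1}{\mid}_{S_{0,c_G}}$, and since all the ellipsoids are chosen pairwise disjoint and disjoint from the ones chosen earlier, $x_{j^{\prime}}$ lies on the smooth part of $S$, on $S_{0,c_G}$. The plan is to establish the three claims in turn, using the explicit placement of the singular points of ${\pi}_{2,1,1}{\mid}_{S_{0,c_G}}$ and of the ellipsoids relative to $\tilde{c_G}(G)$ fixed in that construction. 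For the $(\mathcal{S},D_{\mathcal{S}})$-connectedness: since $D_{j^{\prime}}$ is sufficiently small and $S_{0,c_G}$ is smooth with a single branch through $x_{j^{\prime}}$, the intersection $\overline{D_{j^{\prime}}}\bigcap S$ is a single arc through $x_{j^{\prime}}$ (for a vertex $v_0$ of degree one, $x_{j^{\prime}}$ lies in the interior of $D_{j^{\prime}}$ and this arc passes through $x_{j^{\prime}}$ and through the point $p_{j^{\prime}}$ on $\overline{D_{j^{\prime}}}-D_{j^{\prime}}$); this arc separates the disk $\overline{D_{j^{\prime}}}$ into two closed pieces, exactly one of which is contained in $\overline{D_{\mathcal{S}}}$, and that piece equals $\overline{D_{j^{\prime}}}\bigcap\overline{D_{\mathcal{S}}}$, which is therefore connected.

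Next I would prove $(\mathcal{S},D_{\mathcal{S}},A_{D_{\mathcal{S}}})$-LS, which together with the connectedness gives $(\mathcal{S},D_{\mathcal{S}},A_{D_{\mathcal{S}}})$-PLS. Since $A_{D_{\mathcal{S}}}\subset S$, the set $A_{D_{\mathcal{S}}}\bigcap\overline{D_{j^{\prime}}}\bigcap\overline{D_{\mathcal{S}}}$ lies on the arc $\overline{D_{j^{\prime}}}\bigcap S$ found above, a subarc of $S_{0,c_G}$ around $x_{j^{\prime}}$. The only candidates for points of $A_{D_{\mathcal{S}}}$ there are singular points of ${\pi}_{2,1,i}{\mid}_{S_{0,c_G}}$ and the crossing points and ellipse-arc singular points created by earlier ellipsoids. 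Near the near-vertical tangent at $x_{j^{\prime}}$ there is no horizontal tangent, so no ${\pi}_{2,1,2}$-singular point of $S_{0,c_G}$; the other ${\pi}_{2,1,1}$-singular points, and the data produced by earlier ellipsoids, sit at $x_2$-coordinates $a_{v,\pm,\cdot}$ (respectively $a_{v_0,\pm,\cdot}$) separated by a fixed positive gap, while $D_{j^{\prime}}$ may be made as thin as we wish in the $x_2$-direction (its coefficient $a_2$ is taken sufficiently large). Hence, with the ellipsoids shrunk appropriately, $A_{D_{\mathcal{S}}}\bigcap\overline{D_{j^{\prime}}}\bigcap\overline{D_{\mathcal{S}}}-\{x_{j^{\prime}}\}$ is empty, giving LS and thus PLS.

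To see the pointed set $(D_{j^{\prime}},x_{j^{\prime}})$ is not $(\mathcal{S},D_{\mathcal{S}},A_{D_{\mathcal{S}}})$-PS it suffices, as PS implies $(\mathcal{S},D_{\mathcal{S}},A_{D_{\mathcal{S}}})$-S, to violate the defining condition of S for $i=1$. First, on the side of $S$ pointing towards the relevant vertex, $D_{j^{\prime}}$ reaches exactly the vertical line through it: for a vertex $v$ of degree at least $3$ the radius of $D_{j^{\prime}}$ from the centre $x_{j^{\prime}}$ in the $x_1$-direction towards $v$ has length ${\epsilon}_{v,\pm,j}$ and ends at $({\pi}_{2,1,1}(v),a_{v,\pm,j})$, which lies near $\tilde{c_G}(v)$ at a distance comparable to ${\epsilon}^{\prime}\ll{\epsilon}_0$, hence inside $D_{\{S_{0,c_G}\}}$ and, being at a height not met by the earlier ellipsoids, inside $D_{\mathcal{S}}$; for a vertex $v_0$ of degree one the point $p_{j^{\prime}}\in S_{0,c_G}\subset S\subset\overline{D_{\mathcal{S}}}$ has ${\pi}_{2,1,1}(p_{j^{\prime}})={\pi}_{2,1,1}(v_0)$ and lies on $\overline{D_{j^{\prime}}}-D_{j^{\prime}}$. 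In both cases ${\pi}_{2,1,1}(v)$, respectively ${\pi}_{2,1,1}(v_0)$, belongs to ${\pi}_{2,1,1}(\overline{D_{j^{\prime}}}\bigcap\overline{D_{\mathcal{S}}})$. Second, the hypothesis furnishes an earlier ellipsoid centred at $({\pi}_{2,1,1}(v)\pm{\epsilon}_{v,\pm,j_0},a_{v,\pm,j_0})$, respectively $({\pi}_{2,1,1}(v_0)\pm{\epsilon}_{v_0,\pm,j_0},a_{v_0,\pm,j_0})$; its extreme point in the $x_1$-direction towards the vertex, respectively the corresponding point $p_{j_0^{\prime}}$ on $S_{0,c_G}$, has first coordinate exactly ${\pi}_{2,1,1}(v)$, respectively ${\pi}_{2,1,1}(v_0)$, lies on a curve of $\mathcal{S}$ of degree $2$ or at an intersection point of two curves of $\mathcal{S}$, hence belongs to $F_{D_{\mathcal{S}},1}\subset A_{D_{\mathcal{S}}}$, and remains in $A_{D_{\mathcal{S}}}$ at the present step because no later ellipsoid reaches its height. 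Since ${\pi}_{2,1,1}(v)\neq{x_{j^{\prime}}}_1={\pi}_{2,1,1}(v)\pm{\epsilon}_{v,\pm,j}$ (and similarly for $v_0$), the set ${\pi}_{2,1,1}(A_{D_{\mathcal{S}}})\bigcap{\pi}_{2,1,1}(\overline{D_{j^{\prime}}}\bigcap\overline{D_{\mathcal{S}}})-\{{x_{j^{\prime}}}_1\}$ is non-empty, so the pointed set is not $(\mathcal{S},D_{\mathcal{S}},A_{D_{\mathcal{S}}})$-S, and a fortiori not $(\mathcal{S},D_{\mathcal{S}},A_{D_{\mathcal{S}}})$-PS.

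I expect the main obstacle to be the bookkeeping behind the third paragraph: one must track, as the ellipsoids are added one after another, the ordering and the fixed separation of their $x_2$-coordinates, the fact that each ellipsoid added at a vertex $v$ or $v_0$ meets $S$ in a point of $A_{D_{\mathcal{S}}}$ lying exactly on the vertical line through that vertex, and the persistence of this point under subsequent additions — all of this being implicit in the positioning made precise in \cite{kitazawa7} and recalled in the proof of Theorem \ref{thm:4}, so that the write-up is largely a matter of extracting it. The remaining ingredients, namely the two separation facts and the description of $\overline{D_{j^{\prime}}}\bigcap\overline{D_{\mathcal{S}}}$ as one of the two pieces cut off in $\overline{D_{j^{\prime}}}$ by an arc of $S$, are routine.
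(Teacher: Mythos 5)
Your argument is correct and follows essentially the same route as the paper's proof: connectedness from the smallness of the chosen ${\epsilon}$'s, PLS from the absence of $F_{D_{\mathcal{S}},2}$-points (local shape of $S_{0,c_G}$) and of any other point of $A_{D_{\mathcal{S}}}$ in the thin ellipsoid, and failure of the S-condition for $i=1$ because the boundary of the new ellipsoid reaches the line $x_1={\pi}_{2,1,1}(v)$ (resp. ${\pi}_{2,1,1}(v_0)$) inside $\overline{D_{\mathcal{S}}}$ while the previously put ellipsoid supplies a point of $F_{D_{\mathcal{S}},1}\subset A_{D_{\mathcal{S}}}$ with that same first coordinate, distinct from ${x_{j^{\prime}}}_1$. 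You only spell out more explicitly the membership and persistence checks that the paper leaves as "we can easily check".
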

\begin{proof}
The first part follows from assumptions on a sufficiently small positive number ${\epsilon}_0>0$, another sufficiently small one $\epsilon>0$, and sufficiently small ones with the relations $0<{\epsilon}_{v,-,j}, {\epsilon}_{v,+,j}, {\epsilon}_{v_0,-,j},{\epsilon}_{v_0,+,j}<{\epsilon}^{\prime}<\epsilon$.

We see the additional part. The set
$\overline{D_{j^{\prime}}}$ contains a point of the boundary $\overline{D_{j^{\prime}}}-D_{j^{\prime}}$ such that the value of the projection ${\pi}_{2,1,1}$ there is ${\pi}_{2,1,1}(v)$ or ${\pi}_{2,1,1}(v_0)$. The set
$\overline{D_{j^{\prime}}}$ contains no point from $F_{D_{\mathcal{S}},2}$ by considering the local shape of the curve $S_{0,c_G}$. From the additional assumption, there exists at least one point different from this such that the value of the projection ${\pi}_{2,1,1}$ there is ${\pi}_{2,1,1}(v)$ or ${\pi}_{2,1,1}(v_0)$ in $F_{D_{\mathcal{S}},1}$. We can easily check that the pointed set $(D_j,x_{j^{\prime}})$ is $(\mathcal{S},D_{\mathcal{S}},A_{D_{\mathcal{S}}})$-PLS and it is not $(\mathcal{S},D_{\mathcal{S}},A_{D_{\mathcal{S}}})$-PS.
\end{proof}
We present another example as our new result.
\begin{Thm}
\label{thm:5}
We also abuse the notation from Definition \ref{def:4} here.
In our steps here, for each pair of ellipsoids of the standard form centered at $({\pi}_{2,1,1}(v_0)-{\epsilon}_{v_0,+,j},a_{v_0,+,j})$ and $({\pi}_{2,1,1}(v_0)-{\epsilon}_{v_0,+,j+1},a_{v_0,+,j+1})$ such that ${\pi}_{2,1,1}(v_0)$ is the minimum of the function $c_G$ or ones of the standard form centered at $({\pi}_{2,1,1}(v_0)+{\epsilon}_{v_0,-,j},a_{v_0,-,j})$ and $({\pi}_{2,1,1}(v_0)+{\epsilon}_{v_0,-,j+1},a_{v_0,-,j+1})$ such that ${\pi}_{2,1,1}(v_0)$ is the maximum of the function $c_G$, we can replace the two steps into the step as follows to have the result same as Theorem \ref{thm:4}.
\begin{enumerate}
\item \label{thm:5.1} This is for the former case. We consider the minimum $\min\{{\pi}_{2,1,1}(v_0)-{\epsilon}_{v_0,+,j},{\pi}_{2,1,1}(v_0)-{\epsilon}_{v_0,+,j+1}\}$ in the two-element set $\{{\pi}_{2,1,1}(v_0)-{\epsilon}_{v_0,+,j},{\pi}_{2,1,1}(v_0)-{\epsilon}_{v_0,+,j+1}\}$. For a suitable real number $a_{v_0,+,j}<a_{v_0,+,j,j+1}<a_{v_0,+,j+1}$, we put 
the set $\overline{D_{j^{\prime}}}$ defined as an ellipsoid $D_{j^{\prime}}$ of the standard form centered at a point of the form $(\min \{{\pi}_{2,1,1}(v_0)-{\epsilon}_{v_0,+,j},{\pi}_{2,1,1}(v_0)-{\epsilon}_{v_0,+,j+1}\},a_{v_0,+,j,j+1})$ with the following properties.
\begin{enumerate}
\item \label{thm:5.1.1} The boundary $\overline{D_{j^{\prime}}}-D_{j^{\prime}}$ contains two points $p_{j^{\prime},j^{\prime \prime}} \in S_{0,c_G}$ with ${\pi}_{2,1,1}(p_{j^{\prime}},j^{\prime \prime})={\pi}_{2,1,1}(v_0)$ {\rm (}$j^{\prime \prime}=1,2${\rm )}. 
\item \label{thm:5.1.2} The intersection $D_{j^{\prime}} \bigcap \overline{D_{\mathcal{S}}}$ consists of exactly two connected components and contains the points $({\pi}_{2,1,1}(v_0)-{\epsilon}_{v_0,+,j},a_{v_0,+,j}) \in F_{D_{\mathcal{S}},1} \subset \overline{D_{\mathcal{S}}}-D_{\mathcal{S}}$ and $({\pi}_{2,1,1}(v_0)-{\epsilon}_{v_0,+,j+1},a_{v_0,+,j+1}) \in F_{D_{\mathcal{S}},1} \subset \overline{D_{\mathcal{S}}}-D_{\mathcal{S}}$.
\item \label{thm:5.1.3}
 The set $(\overline{D_{j^{\prime}}}-D_{j^{\prime}}) \bigcap \overline{D_{\mathcal{S}}}$ contains exactly two connected components and each of the two connected components is mapped by ${\pi}_{2,1,1}$ into $\mathbb{R}$ as an injective function with the minimum ${\pi}_{2,1,1}(v_0)$.
\end{enumerate}
\item \label{thm:5.2} This is for the latter case. We consider the maximum $\max\{{\pi}_{2,1,1}(v_0)+{\epsilon}_{v_0,-,j},{\pi}_{2,1,1}(v_0)+{\epsilon}_{v_0,-,j+1}\}$ in the two-element set $\{{\pi}_{2,1,1}(v_0)+{\epsilon}_{v_0,-,j},{\pi}_{2,1,1}(v_0)+{\epsilon}_{v_0,-,j+1}\}$. For a suitable real number $a_{v_0,-,j}<a_{v_0,-,j,j+1}<a_{v_0,-,j+1}$, we put 
the set $\overline{D_{j^{\prime}}}$ defined as an ellipsoid $D_{j^{\prime}}$ of the standard form centered at a point of the form $(\max \{{\pi}_{2,1,1}(v_0)+{\epsilon}_{v_0,-,j},{\pi}_{2,1,1}(v_0)+{\epsilon}_{v_0,-,j+1}\},a_{v_0,-,j,j+1})$ with the following properties.
\begin{enumerate}
\item
\label{thm:5.2.1}
 The boundary $\overline{D_{j^{\prime}}}-D_{j^{\prime}}$ contains two points $p_{j^{\prime},j^{\prime \prime}} \in S_{0,c_G}$ with ${\pi}_{2,1,1}(p_{j^{\prime}},j^{\prime \prime})={\pi}_{2,1,1}(v_0)$ {\rm (}$j^{\prime \prime}=1,2${\rm )}.
\item 
\label{thm:5.2.2}
The intersection $D_{j^{\prime}} \bigcap \overline{D_{\mathcal{S}}}$ consists of exactly two connected components and contains the points $({\pi}_{2,1,1}(v_0)+{\epsilon}_{v_0,-,j},a_{v_0,-,j}) \in F_{D_{\mathcal{S}},1} \subset  \overline{D_{\mathcal{S}}}-D_{\mathcal{S}}$ and $({\pi}_{2,1,1}(v_0)+{\epsilon}_{v_0,-,j+1},a_{v_0,-,j+1}) \in F_{D_{\mathcal{S}},1} \subset \overline{D_{\mathcal{S}}}-D_{\mathcal{S}}$.
\item
\label{thm:5.2.3}
 The set $(\overline{D_{j^{\prime}}}-D_{j^{\prime}}) \bigcap \overline{D_{\mathcal{S}}}$ contains exactly two connected components and each of the two connected components is mapped by ${\pi}_{2,1,1}$ into $\mathbb{R}$ as an injective function with the maximum ${\pi}_{2,1,1}(v_0)$.
\end{enumerate}
\item \label{thm:5.3} The pointed set $(D_j,x_{j^{\prime}})$ is not $(\mathcal{S},D_{\mathcal{S}},A_{D_{\mathcal{S}}})$-LS.
\end{enumerate}
\end{Thm}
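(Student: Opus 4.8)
The plan is to stay inside the inductive procedure used to prove Theorem~\ref{thm:4}, and to show that the two consecutive steps described in the statement can be fused into the single step of (\ref{thm:5.1}) (resp.\ (\ref{thm:5.2})) without changing the final Poincar\'e-Reeb V-digraph for ${\pi}_{2,1,1}$. Since the reflection $(x_1,x_2)\mapsto(-x_1,x_2)$ of ${\mathbb{R}}^2$ carries ellipsoids of the standard form to ellipsoids of the standard form, interchanges local minima and local maxima of $c_G$, and intertwines ${\pi}_{2,1,1}$ with its negative, case (\ref{thm:5.2}) follows formally from case (\ref{thm:5.1}), so I would treat only (\ref{thm:5.1}). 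Put $m:={\pi}_{2,1,1}(v_0)$. At the stage of the procedure immediately before the two original steps, near the line $\{x_1=m\}$ the set $\overline{D_{\mathcal{S}}}$ coincides with the regular neighbourhood of $\tilde{c_G}(G)$ and consists of two thin ``nubs'' opening in the direction of decreasing $x_1$; their leftmost points are the two singular points $q_1:=({\pi}_{2,1,1}(v_0)-{\epsilon}_{v_0,+,j},a_{v_0,+,j})$ and $q_2:=({\pi}_{2,1,1}(v_0)-{\epsilon}_{v_0,+,j+1},a_{v_0,+,j+1})$ of ${\pi}_{2,1,1} {\mid}_{S_{0,c_G}}$, both lying strictly to the left of $\{x_1=m\}$, with $a_{v_0,+,j}<a_{v_0,+,j+1}$, and along each nub $S_{0,c_G}$ is a pair of nearly horizontal arcs joined by a short cap, so the fibre of ${\pi}_{2,1,1} {\mid}_{\overline{D_{\mathcal{S}}}}$ over $m$ near each nub is a short vertical segment whose two endpoints lie on $S_{0,c_G}$.

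Next I would construct $D_{j^{\prime}}$ as an ellipsoid of the standard form centred at $x_{j^{\prime}}:=(\min\{{\pi}_{2,1,1}(v_0)-{\epsilon}_{v_0,+,j},{\pi}_{2,1,1}(v_0)-{\epsilon}_{v_0,+,j+1}\},a_{v_0,+,j,j+1})$ with $a_{v_0,+,j}<a_{v_0,+,j,j+1}<a_{v_0,+,j+1}$, with vertical semi-axis large enough that $q_1,q_2\in D_{j^{\prime}}$, and with horizontal semi-axis chosen so that the leftmost point of $\overline{D_{j^{\prime}}}$ lies outside $\overline{D_{\mathcal{S}}}$ while the part of $\overline{D_{j^{\prime}}}-D_{j^{\prime}}$ meeting $\overline{D_{\mathcal{S}}}$ reaches the line $\{x_1=m\}$ but no line $\{x_1=c\}$ with $c>m$. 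Because the two nubs are disjoint, thin, and terminate strictly to the right of the leftmost point of $\overline{D_{j^{\prime}}}$, an elementary computation with the ellipse against the two nearly horizontal strips shows that one can arrange the following simultaneously: $\overline{D_{j^{\prime}}}-D_{j^{\prime}}$ meets $S_{0,c_G}$ transversally in exactly two points $p_{j^{\prime},1},p_{j^{\prime},2}$, one on each nub and both lying on $\{x_1=m\}$; the intersection $D_{j^{\prime}}\bigcap\overline{D_{\mathcal{S}}}$ has exactly two connected components, each containing one of $q_1,q_2$; and $(\overline{D_{j^{\prime}}}-D_{j^{\prime}})\bigcap\overline{D_{\mathcal{S}}}$ has exactly two connected components, each an arc on which ${\pi}_{2,1,1}$ is injective with minimum $m$ (no turning point of ${\pi}_{2,1,1} {\mid}_{\overline{D_{j^{\prime}}}-D_{j^{\prime}}}$ lies in $\overline{D_{\mathcal{S}}}$). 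This gives (\ref{thm:5.1.1})--(\ref{thm:5.1.3}); transversality of $\overline{D_{j^{\prime}}}-D_{j^{\prime}}$ with the other curves of the current family is arranged generically, and one checks directly that $(\{S_{j^{\prime}}\},D_{j^{\prime}},A_{j^{\prime}})$ and the new triple $({\mathcal{S}}^{\prime},D_{{\mathcal{S}}^{\prime}},A_{D_{{\mathcal{S}}^{\prime}}})$ are again refined algebraic domains with poles.

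It then remains to check that this single step produces the same result as the two original steps, hence inductively that the final domain is as in Theorem~\ref{thm:4}. Since $D_{j^{\prime}}\bigcap\overline{D_{\mathcal{S}}}$ is the disjoint union of its two components, deleting $\overline{D_{j^{\prime}}}$ from $\overline{D_{\mathcal{S}}}$ performs, independently near $q_1$ and near $q_2$, exactly the surgery carried out in the original proof by the two small ellipsoids: each nub, formerly protruding to the left of $\{x_1=m\}$ as far as its singular point, is trimmed so that its new boundary arc lies on $\overline{D_{j^{\prime}}}-D_{j^{\prime}}$, has leftmost point on $\{x_1=m\}$, and carries ${\pi}_{2,1,1}$ injectively---precisely the properties with which the two original ellipsoids were chosen. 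Outside a small neighbourhood of $\{x_1=m\}$ the two candidate domains coincide, while over that neighbourhood the restrictions of ${\pi}_{2,1,1}$ to the two closed domains are related by a homeomorphism respecting ${\pi}_{2,1,1}$; hence the quotient digraphs $W$ and the induced functions $V$ coincide, which is the meaning of ``the result same as Theorem~\ref{thm:4}''. Finally, for (\ref{thm:5.3}): the set $\overline{D_{j^{\prime}}}\bigcap\overline{D_{\mathcal{S}}}$ contains $q_1,q_2\in F_{D_{\mathcal{S}},1}\subset A_{D_{\mathcal{S}}}$, and each of them has second coordinate different from $a_{v_0,+,j,j+1}={x_{j^{\prime}}}_2$; since ${x_{j^{\prime}}}_1$ equals the smaller of ${\pi}_{2,1,1}(v_0)-{\epsilon}_{v_0,+,j}$ and ${\pi}_{2,1,1}(v_0)-{\epsilon}_{v_0,+,j+1}$, at most one of $q_1,q_2$ has first coordinate ${x_{j^{\prime}}}_1$, so the other lies on neither the vertical nor the horizontal line through $x_{j^{\prime}}$; thus $A_{D_{\mathcal{S}}}\bigcap\overline{D_{j^{\prime}}}\bigcap\overline{D_{\mathcal{S}}}-\{x_{j^{\prime}}\}$ is not of the shape permitted in the definition of $(\mathcal{S},D_{\mathcal{S}},A_{D_{\mathcal{S}}})$-LS in Definition~\ref{def:4}, so $(D_{j^{\prime}},x_{j^{\prime}})$ is not $(\mathcal{S},D_{\mathcal{S}},A_{D_{\mathcal{S}}})$-LS.

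I expect the main obstacle to be the construction step of the second paragraph: verifying rigorously that one standard-form ellipsoid can be positioned so that it simultaneously encloses both singular points, meets $S_{0,c_G}$ in exactly the two prescribed points over $\{x_1=m\}$, cuts both $\overline{D_{\mathcal{S}}}$ and its boundary into exactly two connected components with the stated ${\pi}_{2,1,1}$-injectivity, and has on the Poincar\'e-Reeb V-digraphs the same effect as the two original ellipsoids. All of this is elementary planar geometry, but it is sensitive to the relative sizes of ${\epsilon}_0$, ${\epsilon}^{\prime}$ and $\epsilon$ and to the choice of the semi-axes of $D_{j^{\prime}}$, and that bookkeeping is where the real work lies.
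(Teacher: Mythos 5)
Your proposal follows essentially the same route as the paper's proof: a single tall, horizontally narrow ellipsoid of the standard form centered at the prescribed point, containing both points of $F_{D_{\mathcal{S}},1}$, with boundary meeting $S_{0,c_G}$ in exactly two points over the line $\{x_1={\pi}_{2,1,1}(v_0)\}$ and cutting $\overline{D_{\mathcal{S}}}$ in two components, with (\ref{thm:5.2}) obtained by symmetry and (\ref{thm:5.3}) by comparing coordinates, and your explicit check that the Poincar\'e-Reeb V-digraph is unchanged only makes precise what the paper leaves implicit. One small repair in your argument for (\ref{thm:5.3}): Definition \ref{def:4} requires $x_{j^{\prime}} \in D_{j^{\prime}} \bigcap (\overline{D_{\mathcal{S}}}-D_{\mathcal{S}})$, and the center of the ellipsoid need not lie on $\overline{D_{\mathcal{S}}}-D_{\mathcal{S}}$, so the basepoint should instead be taken to be one of the two points $({\pi}_{2,1,1}(v_0)-{\epsilon}_{v_0,+,j},a_{v_0,+,j})$, $({\pi}_{2,1,1}(v_0)-{\epsilon}_{v_0,+,j+1},a_{v_0,+,j+1})$ inside $D_{j^{\prime}}$; the other point then differs from it in both coordinates, which is exactly the coordinate-difference argument the paper uses.
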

\begin{proof}
We prove (\ref{thm:5.1}).

Remember again that the relations $0<{\epsilon}_{v_0,-,j},{\epsilon}_{v_0,+,j} <{\epsilon}^{\prime}<\epsilon$ are satisfied and that the numbers ${\epsilon}_{v_0,-,j_1}$ and ${\epsilon}_{v_0,-,j_2}$ (${\epsilon}_{v_0,+,j_1}$ and ${\epsilon}_{v_0,+,j_2}$) are mutually disjoint for distinct numbers $j_1$ and $j_2$ and sufficiently small. Remember also that the relation $a_{v_0,-,j_1}<a_{v_0,-,j_2}$ holds for an arbitrary pair $(j_1,j_2)$ with $j_1<j_2$, that the relation $a_{v_0,+,j_1}<a_{v_0,+,j_2}$ holds for an arbitrary pair $(j_1,j_2)$ with $j_1<j_2$, and that the relations $a_{v_0,-,j_{-}}<a_{v_0,+,j_{+}}$ and $a_{v_0}-{\epsilon}^{\prime}<a_{v_0,-,j_{-}}, a_{v_0,+,j_{+}}<a_{v_0}+{\epsilon}^{\prime}$ hold for an arbitrary pair $(j_{-},j_{+})$. By our definitions and assumptions on the sufficiently small numbers, we can regard the relations $0<{\epsilon}_{v_0,+,j},{\epsilon}_{v_0,+,j+1}<a_{v_0,+,j+1}-a_{v_0,+,j}<{\epsilon}^{\prime}<2{\epsilon}^{\prime}<\epsilon$ to be true and that the numbers ${\epsilon}_{v_0,+,j}$ and ${\epsilon}_{v_0,+,j+1}$ are sufficiently small. 

In addition, ${\pi}_{2,1,1}(v_0)$ is the minimum or the maximum of the function $c_G={\pi}_{2,1,1} \circ \tilde{c_G}$.

We can put the desired ellipsoid $D_{j^{\prime}}$ of the standard form centered at a point of the form $(\min \{{\pi}_{2,1,1}(v_0)-{\epsilon}_{v_0,+,j},{\pi}_{2,1,1}(v_0)-{\epsilon}_{v_0,+,j+1}\},a_{v_0,+,j,j+1})$ and this completes our proof of (\ref{thm:5.1}). This set $D_{j^{\prime}}$ is also regarded as one with $a_1$ in our definition "$\{x=(x_1,x_2) \in {\mathbb{R}}^2 \mid a_1{(x_1-x_{0,1})}^2+a_2{(x_2-x_{0,2})}^2 \leq r\}$" being sufficiently large and with $a_2$ being chosen as large as possible. We cannot choose $a_2$ sufficiently large. This is due to (\ref{thm:5.1.2}).

For this, see also FIGUREs \ref{fig:4} and \ref{fig:5}.
\begin{figure}
	\includegraphics[width=80mm,height=80mm]{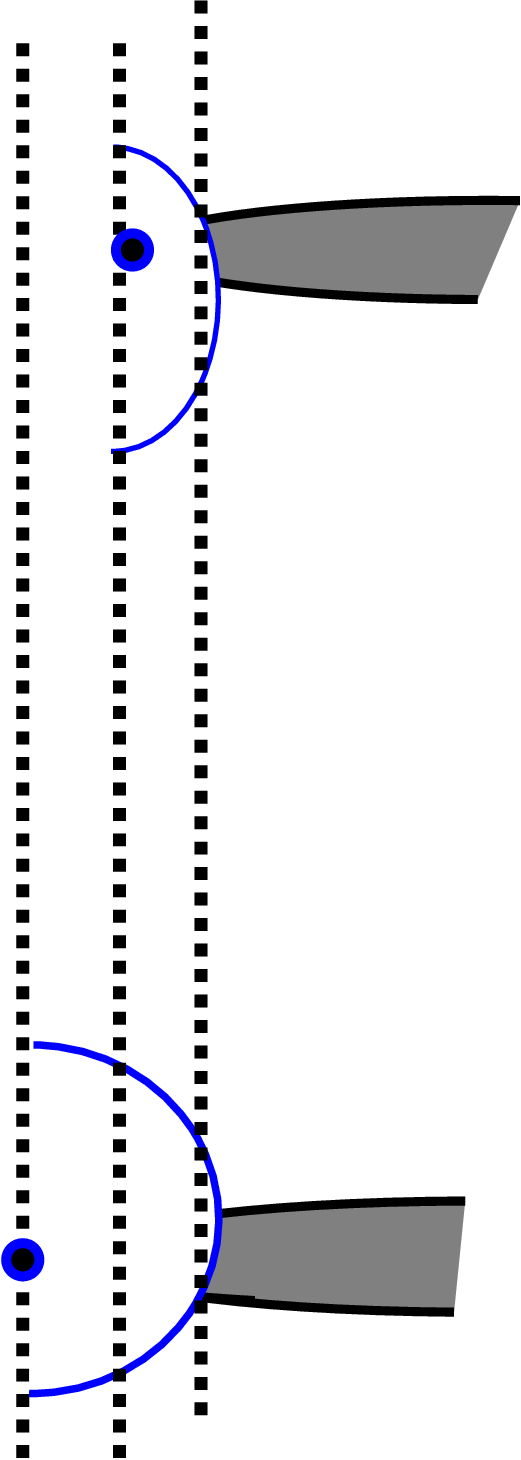}
	\caption{A pair of ellipsoids of the standard form containing the two points $({\pi}_{2,1,1}(v_0)-{\epsilon}_{v_0,+,j},a_{v_0,+,j})$ and $({\pi}_{2,1,1}(v_0)-{\epsilon}_{v_0,+,j+1},a_{v_0,+,j+1})$, colored in black with blue colored contours.}
	\label{fig:4}
\end{figure}
\begin{figure}
	\includegraphics[width=80mm,height=80mm]{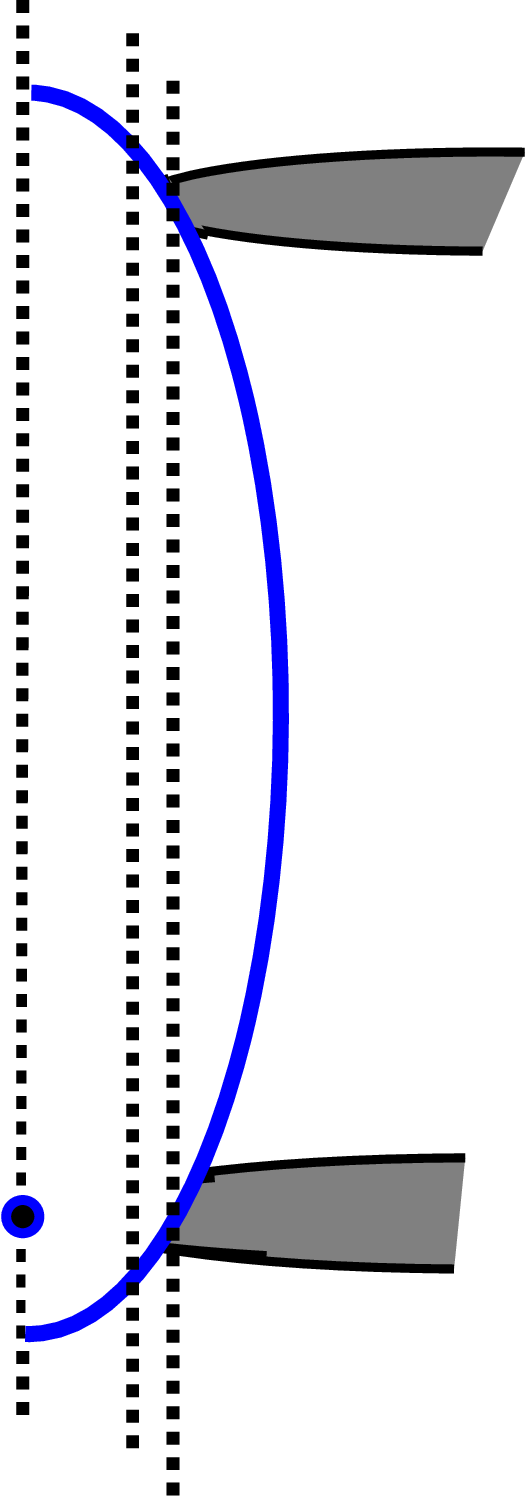}
	\caption{The desired ellipsoid $D_{j^{\prime}}$ of the standard form centered at a point of the form $(\min \{{\pi}_{2,1,1}(v_0)-{\epsilon}_{v_0,+,j},{\pi}_{2,1,1}(v_0)-{\epsilon}_{v_0,+,j+1}\}(={\pi}_{2,1,1}(v_0)-{\epsilon}_{v_0,+,j},a_{v_0,+,j,j+1})$. This respects FIGURE \ref{fig:4}.}
	\label{fig:5}
\end{figure}

We can have (\ref{thm:5.2}) by the symmetry.

(\ref{thm:5.3}) follows from the difference of the values of the components of the two points in (\ref{thm:5.1.2}).

This completes our proof.

\end{proof}
For a related topic, we present a result from \cite{kitazawa2} explicitly.
The {\it Reeb graph} of a smooth function on a manifold with no boundary is the graph which is also the space of all connected components of the preimages of single points for the function and the natural quotient space of the manifold and whose vertex set consists of all connected components containing some singular points of the function. Such objects are graphs in considerable situations from \cite{saeki1, saeki2} for example, and we can also check easily in explicit cases.  
We can also define the structure of V-digraphs for Reeb graphs canonically.

\begin{Thm}
\label{thm:6}
In Definition \ref{def:1}, let $A$ be a finite set of size $|A|$ and $S_j$ the zero set of a real polynomial $f_j$ of two variables, for each curve $S_j$, an element $a(j) \in A$ is assigned by the rule such that for two distinct curves $S_{j_1}$ and $S_{j_2}$ intersecting in $\overline{D_{\mathcal{S}}}$, distinct elements are assigned, that the map to $A$ is surjective, and that $\overline{D_{\mathcal{S}}}={\bigcap}_j \{x \in {\mathbb{R}}^2 \mid f_j(x) \geq 0\}$. Let $m_A$ be a function whose values are positive integers. Then $M:=\{(x,\{y_a\}_{a \in A}) \in {\mathbb{R}}^{({\Sigma}_{a \in A} m_A(a))+|A|+2} \mid a \in A, {\prod}_{j \in {m_A}^{-1}(a)} (f_j(x))-{\Sigma}_{j=1}^{m_A(a)+1} {y_{a,j}}^2=0\}$ with $y_a=(y_{a,1}, \cdots y_{a,m_A(a)})$ is the zero set of the real polynomial map defined by ${\prod}_{j \in {m_A}^{-1}(a)} f_j(x)$ {\rm (}$a \in A${\rm )} and non-singular. The restriction of ${\pi}_{m+|A|,2,1}$ there is a real algebraic map onto the closure $\overline{D_{\mathcal{S}}}$. The Reeb graph of the composition of the map with the projection ${\pi}_{2,1,i}$ is isomorphic to the Poincar\'e-Reeb V-digraph for ${\pi}_{2,1,i}$ as V-digraphs {\rm (}note that in the case some $S_j$ is a straight line we need additional and small attention{\rm :} it is a small problem and we do not need to understand theory related to Reeb graphs here{\rm )}.

\end{Thm}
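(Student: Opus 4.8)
The plan is to check the three assertions in turn: that $M$ is a non-singular real algebraic set, that the coordinate projection $p\colon(x,\{y_a\})\mapsto x$ restricts to a surjection $M\to\overline{D_{\mathcal S}}$, and that the Reeb graph of $\pi_{2,1,i}\circ p$ reproduces the Poincar\'e--Reeb V-digraph $(W_{D_{\mathcal S},i},V_{D_{\mathcal S},i})$.

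Write $f_a:=\prod_{j\colon a(j)=a}f_j$ and $g_a(x,\{y\}):=f_a(x)-\sum_{l}y_{a,l}^{2}$, so that $M=\bigcap_{a\in A}g_a^{-1}(0)$ is the zero set of the polynomial map $G:=(g_a)_{a\in A}$. For non-singularity I would examine $dG$, which has a near-block shape: $g_a$ depends only on $x$ and on the block $y_a$, with $\partial g_a/\partial y_{a,l}=-2y_{a,l}$. Hence at a point of $M$ the row of $dG$ indexed by $a$ is nonzero and linearly independent of all others whenever $y_a\ne 0$; the only way the rank can drop below $|A|$ is that some rows with $y_a=0$ are dependent, and for such $a$ the equation $g_a=0$ forces $f_a(x)=0$, i.e. $x$ lies on some $S_j$ with $a(j)=a$, and that row equals $(\nabla_x f_a(x)\mid 0\mid\cdots)$. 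If exactly one curve of colour $a$ passes through $x$, say $S_{j_0}$, then $\nabla_x f_a(x)=\bigl(\prod_{j\ne j_0}f_j(x)\bigr)\nabla f_{j_0}(x)\ne 0$ since $S_{j_0}$ is non-singular and the remaining factors do not vanish at $x$; here one uses that distinct curves of the same colour do not meet (guaranteed inside $\overline{D_{\mathcal S}}$, and arranged globally by refining the colouring). If $x$ lies on two curves $S_{j_1},S_{j_2}$ of distinct colours $a_1\ne a_2$ --- the maximum permitted by Definition \ref{def:1}(2) --- then the rows $a_1,a_2$ are nonzero scalar multiples of $(\nabla f_{j_1}(x)\mid 0)$ and $(\nabla f_{j_2}(x)\mid 0)$, and these are independent because the normal-crossing condition of Definition \ref{def:1}(2) makes $\nabla f_{j_1}(x),\nabla f_{j_2}(x)$ independent; the other rows carry nonzero entries in their own $y$-blocks. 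So $dG$ has rank $|A|$ throughout $M$ and $M$ is non-singular.

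The image of $p$ is $\{x\colon f_a(x)\ge 0\text{ for all }a\}$ because every nonnegative real is a sum of squares; combining the hypothesis $\overline{D_{\mathcal S}}=\bigcap_j\{f_j\ge 0\}$ with the fact that every curve meets $\overline{D_{\mathcal S}}$ and the curves cross normally, one checks $\bigcap_a\{f_a\ge 0\}=\overline{D_{\mathcal S}}$ (replacing $M$, if necessary, by the union of its components lying over $\overline{D_{\mathcal S}}$, which is closed and open in $M$), so $p\colon M\to\overline{D_{\mathcal S}}$ is onto, and since $\|y_a\|^{2}=f_a(x)$ is bounded on the compact $\overline{D_{\mathcal S}}$ the space $M$ is compact and $p$ is proper. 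The fibre of $p$ over $x$ is the product over $a\in A$ of the spheres $\{y_a\colon\|y_a\|^{2}=f_a(x)\}$, of positive dimension $m_A(a)$ (degenerating to a point where $f_a(x)=0$), hence connected. Writing $F_i:=\pi_{2,1,i}\circ p\colon M\to\mathbb R$, for each value $c$ the restriction $p\colon F_i^{-1}(c)=p^{-1}\bigl(\pi_{2,1,i}^{-1}(c)\cap\overline{D_{\mathcal S}}\bigr)\to\pi_{2,1,i}^{-1}(c)\cap\overline{D_{\mathcal S}}$ is again a proper surjection with connected fibres, hence induces a bijection on connected components. Therefore $p$ descends to a homeomorphism of the Reeb graph $M/\!\sim_{F_i}$ onto $\overline{D_{\mathcal S}}/\!\sim_{D_{\mathcal S},i}=W_{D_{\mathcal S},i}$ intertwining $F_i$ with $V_{D_{\mathcal S},i}$, and a fortiori respecting the edge orientations.

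It remains to match the vertex sets, and this is the step I expect to be the main obstacle. I would run a local-model computation at each point of $\overline{D_{\mathcal S}}$: over an interior point $p$ is a submersion and $\pi_{2,1,i}$ is regular, so $F_i$ has no critical point there; over a smooth boundary point lying on one curve $S_j$ of colour $a$ one may choose coordinates so that, locally, $M\cong\{u=\tfrac1c\|y_a\|^{2}\}\times(\text{sphere factors})$ with $u$ a local defining function of $S_j$, and $F_i$ acquires a critical point exactly when $S_j$ is tangent to the fibre of $\pi_{2,1,i}$ there; over a crossing of two curves one gets the evident two-curve version. This shows that the level-set components of $F_i$ containing critical points of $F_i$ project precisely onto the level-set components of $\pi_{2,1,i}|_{\overline{D_{\mathcal S}}}$ meeting $F_{D_{\mathcal S},i}$, with one exception: when $S_j$ is a component of the zero set of a degree-one polynomial whose line is itself a fibre of $\pi_{2,1,i}$, the model above (with $u$ equal to the relevant coordinate of $x$) produces a spurious extremum of $F_i$ with no counterpart in $F_{D_{\mathcal S},i}$. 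This is exactly the ``additional and small attention'' mentioned in the statement; it is dealt with by removing that level-set component from the vertex set (equivalently, observing that the corresponding boundary segment collapses to a single value). Granting this, the two V-digraphs are isomorphic. The genuinely formal parts are the block structure of $dG$ and the connectedness of the fibres of $p$; the care is all in the boundary local models and the degree-one case.
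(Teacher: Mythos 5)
The paper itself contains no proof of Theorem \ref{thm:6}: it is presented as a result imported from \cite{kitazawa2} (``we present a result from \cite{kitazawa2} explicitly''), so there is no in-paper argument to compare yours against. That said, your reconstruction follows exactly the route this construction is designed for, and it is essentially correct: the block structure of the Jacobian of $(g_a)_{a\in A}$ (each row is the only one touching its own $y_a$-block, and at most two rows can have $y_a=0$ over a point of $\overline{D_{\mathcal{S}}}$, their $x$-gradients being independent by the normal-crossing condition); the image description via sums of squares; compactness over the bounded set $\overline{D_{\mathcal{S}}}$, properness, and connectedness of the fibres (products of spheres $S^{m_A(a)}$ of dimension $m_A(a)\geq 1$ --- this is precisely why $m_A$ takes positive integer values, and you use it correctly); the closed-map-with-connected-fibres lemma giving a bijection of level-set components and hence an identification of the Reeb quotient with $W_{D_{\mathcal{S}},i}$ compatible with the values; and the local models $u=\|y_a\|^{2}$ at a fold and $(s,t)=(\|y_{a_1}\|^{2},\|y_{a_2}\|^{2})$ at a normal crossing showing that critical points of $\pi_{2,1,i}\circ p$ sit exactly over $F_{D_{\mathcal{S}},i}$, with the straight line parallel to the fibre direction as the sole exception --- which is exactly the ``additional and small attention'' the statement reserves for degree-one curves.

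One point needs to be made explicit rather than parenthetical. As literally stated the theorem asserts that the whole zero set $M$ is non-singular and that the projection maps $M$ onto $\overline{D_{\mathcal{S}}}$; both are global claims, and your argument only controls the part of $M$ lying over $\overline{D_{\mathcal{S}}}$. Outside the closure two curves of the same colour may intersect (so $\nabla f_a$ can vanish on $\{f_a=0\}$ and $M$ can be singular there), and $\bigcap_{a}\{f_a\ge 0\}$ may strictly contain $\bigcap_{j}\{f_j\ge 0\}$ because a product of the $f_j$ is nonnegative wherever an even number of factors are negative. Your two patches --- ``refining the colouring'' and replacing $M$ by the union of its connected components over $\overline{D_{\mathcal{S}}}$ (which is indeed open and closed in $M$; the openness follows from the sign analysis near boundary points, since the other factors of the same colour are strictly positive there) --- do repair the argument, but they amount to proving a corrected reading of the statement rather than the statement verbatim, and the precise hypotheses under which no correction is needed live in \cite{kitazawa2}. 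Say so explicitly; with that caveat recorded, your proof is a sound and complete reconstruction.
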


For example, in Theorem \ref{thm:4}, we replace several pairs $(D_{{j_1}^{\prime}}, D_{{j_2}^{\prime}})$ of distinct ellipsoids into single ellipsoids and have Theorem \ref{thm:5}. This decreases the degrees of the polynomials in $M:=\{(x,\{y_a\}_{a \in A}) \in {\mathbb{R}}^{({\Sigma}_{a \in A} m_A(a))+2} \mid a \in A, {\prod}_{j \in {m_A}^{-1}(a)} (f_j(x))-{\Sigma}_{j=1}^{m_A(a)} {y_{a,j}}^2=0\}$ and makes the polynomials simpler in a sense.

Note also that from the simplest case $(\{S^1\},{D^2}^{\circ})$, we have the canonical projection ${\pi}_{m_A(a)+3,2,1} {\mid}_{S^{m_A(a)+2}}:S^{m_A(a)+2} \rightarrow {\mathbb{R}}^2$ with $|A|=1$.

Such observations are on explicit discoveries on reconstruction of nice and explicit real algebraic functions whose Reeb graphs are isomorphic to given graphs. Such studies are essentially founded by the author \cite{kitazawa1}. Originally, \cite{sharko}, followed by \cite{masumotosaeki}, is a pioneering study, studying cases of reconstruction of differentiable (smooth) functions on closed surfaces.





\section{Conflict of interest and Data availability.}
\noindent {\bf Conflict of interest.}
The author works at Institute of Mathematics for Industry (https://www.jgmi.kyushu-u.ac.jp/en/about/young-mentors/) and the present work is closely related to our study. We thank them for supports and encouragement. The author is also a researcher at Osaka Central
Advanced Mathematical Institute (OCAMI researcher), supported by MEXT Promotion of Distinctive Joint Research Center Program JPMXP0723833165. He is not employed there. We also thank them for the hospitality. The author would also like to thank the conference "Singularity theory of differentiable maps and its applications" (https://www.fit.ac.jp/$\sim$fukunaga/conf/sing202412.html) for letting the author to present related result \cite{kitazawa1, kitazawa2, kitazawa4} of the author. Presented comments have helped the author to study further including the present study. The conference is supported by the Research Institute for Mathematical Sciences, an International Joint Usage/Research Center located in Kyoto University. \\
\ \\
{\bf Data availability.} \\
Data essentially related to our present study are all in the present file.

\end{document}